\newcommand{\new}[1]{{#1}}
\title{The Topological Behavior of Preferential Attachment Graphs}
\author{Chunyin Siu}
\date{}
\begin{document}

\maketitle
\abstract{
We investigate the higher-order connectivity of scale-free networks using algebraic topology. We model scale-free networks as preferential attachment graphs, and we study the algebraic-topological properties of their clique complexes. We focus on the Betti numbers and the homotopy-connectedness of these complexes. We determine the asymptotic almost sure orders of magnitude of the Betti numbers. We also establish the occurence of homotopical phase transitions for the infinite complexes, and we determine the critical thresholds at which the homotopy-connectivity changes. This partially verifies Weinberger's conjecture on the homotopy type of the infinite complexes. We conjecture that the mean-normalized Betti numbers converge to power-law distributions, and we present numerical evidence. Our results also highlight the subtlety of the scaling limit of topology, which arises from the tension between topological operations and analytical limiting process. We discuss such tension at the end of the Introduction.
}




\section{Introduction}

Many real-world networks are believed to be scale-free, in the sense that their degree distributions obey power laws, whose variance is often infinite \citep{barabasi99_preferentialAttachment}. In particular, preferential attachment graphs, defined in \cref{def:preferential_attachment}, are popular models for such networks. In these graphs, nodes are inductively added and attached to $m$ randomly chosen previous nodes. At each discrete time step, the new node is more likely to attach to high-degree nodes. The extent of this likeliness, called the strength of preferential attachment, can be controlled by a real parameter, often denoted by $\delta$. The graph-theoretical properties of scale-free networks and preferential attachment graphs have been extensively studied, and we refer the reader to \citep{hofstad16_randomGraphs1,hofstad24_randomGraphs2} for comprehensive surveys.

Recently, there has been substantial interest in the higher-order connectivity in various (not necessarily scale-free) networks \citep{benson16_complexNetwork_motif,watts98_clusteringCoefficient,nolte20_brain_higherOrderNetwork,lambiotte19_higherOrderNetworks,battiston20_higherOrderNetworks,bianconi21_higherOrderNetwork}. In this work, we study higher-order connectivity using concepts from algebraic topology.

For instance, \emph{homotopy-connectedness} generalizes path-connectedness.

\begin{definition}[Homotopy and Homotopy-Connectedness]\label{def:homotopy_connected} $\quad$
\begin{itemize}
\item Two maps $f, g: X \to Y$ such that $f(x_0) = g(x_0) = y_0$ are said to be homotopic if there exists a map $F: X \times [0, 1] \to X$ such that $F(\cdot, 0) = f$, $F(\cdot, 1) = g$, and $F(x_0, \cdot) = y_0$ (Cf. p.3 of \citep{hatcher02_algtopo}).
\item For each nonnegative integer $q$, let $S^q$ be the $q$-dimensional sphere and fix an $s_q \in S^q$. Let $X$ be a nonempty topological space. $X$ is said to be $q$-homotopy-connected if for every $x_0 \in X$ and every integer $0 \leq r \leq q$, every map $f: S^r \to X$ such that $f(s_r) = x_0$ is homotopic to the constant map $s \mapsto x_0$ (Cf. p.346 of \citep{hatcher02_algtopo}).
\end{itemize}
\end{definition}

$0$-homotopy-connectedness is equivalent to path-connectedness ($S^0 = \{-1, 1\} \subseteq \mathbb{R}$). In general, $q$-homotopy-connected spaces are more connected when $q$ is larger, in the sense that $q$-homotopy-connected spaces are automatically $r$-homotopy-connected if $0 \leq r < q$. Intuitively, a $q$-homotopy-connected space has no spherical holes of dimension less than or equal to $q$. We further discuss homotopy-connectedness in \cref{sec:homotopy_theory}, in particular, after \cref{def:homotopy_equivalent_via_group}.

Despite the rich theory of algebraic topology, much less is known about the algebraic-topological properties of scale-free networks. \new{Most variants of preferential attachment graphs are path-connected by construction, but it is unclear whether they (more precisely, their associated complexes) are homotopy-connected.}

\subsection{Homological and Homotopical Properties of Preferential Attachment}

\new{
Between the two main branches of algebraic topology, namely homology theory and homotopy theory, computations in homotopy theory are generally more difficult. Indeed, even the homotopy properties of \emph{spheres} are not completely understood. We explain this further at the end of \cref{sec:homotopy_theory}. This difficulty partly explains the gap in the literature about the homotopy-connectedness of scale-free networks.

In terms of the homological properties of scale-free networks, 
}
the orders of magnitude of the mean \emph{Betti numbers} of finite preferential attachment \emph{clique complexes} were determined in \citep{siu23_PrefAtt_betti}, and critical thresholds at which the mean Betti numbers start exhibiting unbounded growth were discovered there as well.

Intuitively, the $q$-dimensional Betti number of a topological space is the number of independent $q$-dimensional holes (possibly non-spherical) in the space. Its formal definition is given below. The definitions of simplicial complexes and clique complexes are given in \cref{sec:homology_theory}. There we also further discuss their Betti numbers, and we contrast this homological property with homotopical properties in \cref{sec:homology_vs_homotopy}.

\begin{definition}[Homology Group, Betti Number, Cycle and Boundary; Section 5 of \citep{munkres84algtopo}]
\label{def:homology}
Let $X$ be a simplicial complex with totally ordered vertices.
\begin{itemize}
\item For each nonnegative integer $q$, let $C_q(X)$ be the free abelian group generated by the $q$-dimensional simplices of $X$, and let $\partial_q: C_q \to C_{q-1}$ be the homomorphism defined by
$$\partial_q\{x_0, ..., x_q\} = \sum_{0 \leq i \leq q} (-1)^i \{x_0, ..., \hat x_i, ..., x_q\}$$
whenever $\{x_0, ..., x_q\}$ is a simplex of $X$ and $x_0 < ... < x_q$,
where the hat denotes removal, e.g. $\{x_0, \hat x_1, x_2\} = \{x_0, x_2\}$.
\item The homology group $H_q(X)$ and the Betti number $\beta_q(X)$ of $X$ at dimension $q$ are defined by 
\begin{align*}
H_q(X) &= \ker \partial_q / \im \partial_{q+1}
\\
\beta_q(X) &= \rk H_q(X),
\end{align*}
where $/$ denotes group quotient.
\item Elements of $H_q(X)$, $\ker \partial_q$ and $\im \partial_{q+1}$ are called, respectively, homology classes, cycles and boundaries of dimension $q$.
\end{itemize}
\end{definition}

\begin{remark}$\quad$
\begin{itemize}
\item The quotient in the definition of the homology group $H_q(X)$ is well-defined because one can show that $\partial_q \partial_{q+1} = 0$, and hence $\im \partial_{q+1} \subseteq \ker \partial_q$.
\item Homology classes are represented by cycles. Boundaries represent the trivial homology class, and they do not contribute towards the Betti numbers.
\end{itemize}
\end{remark}

\subsection{Highlights on our Main Results}
\label{sec:highlights}

\begin{figure}
\centering
\includegraphics[height = 4cm]{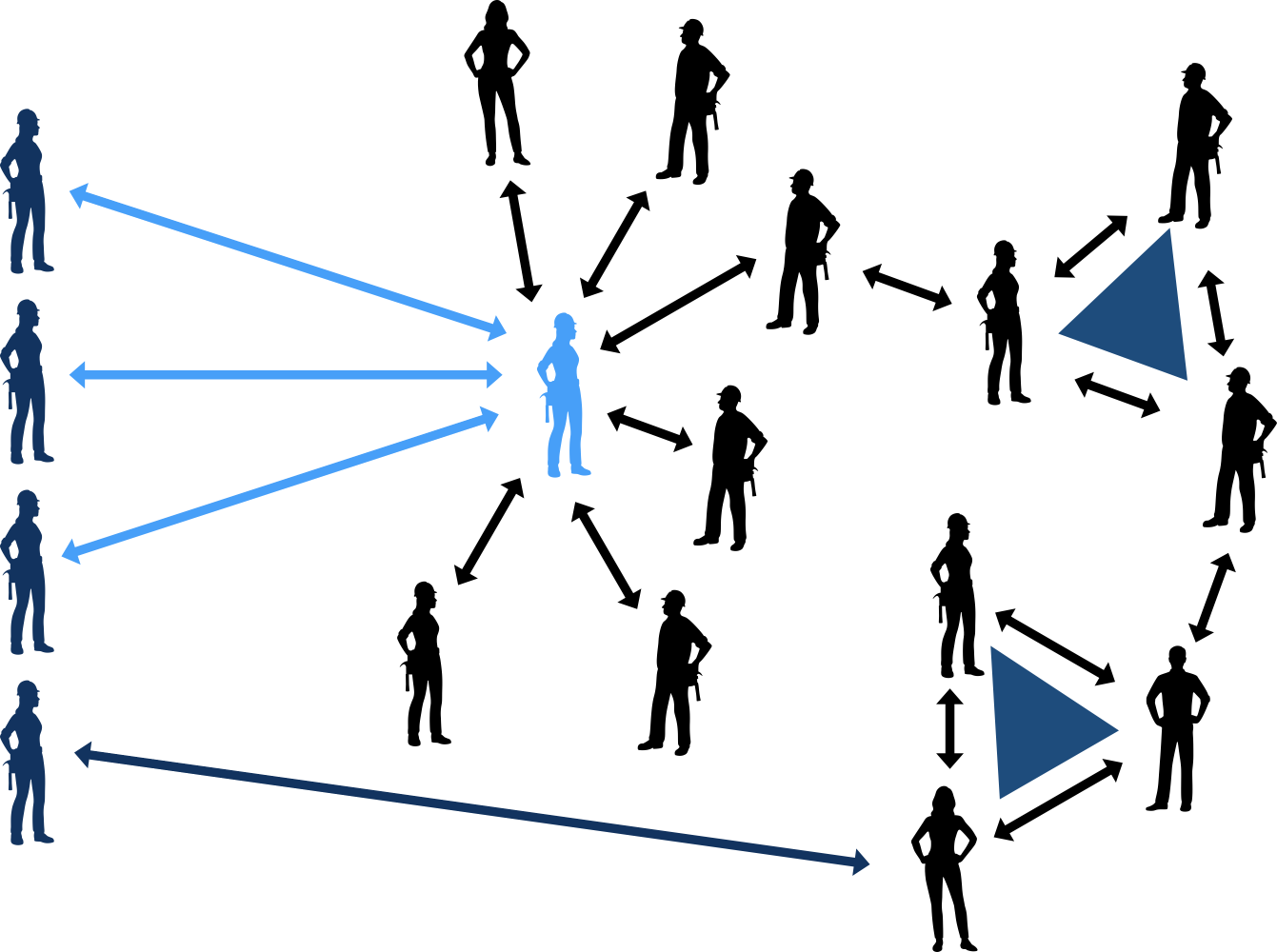}
\includegraphics[height = 4cm]{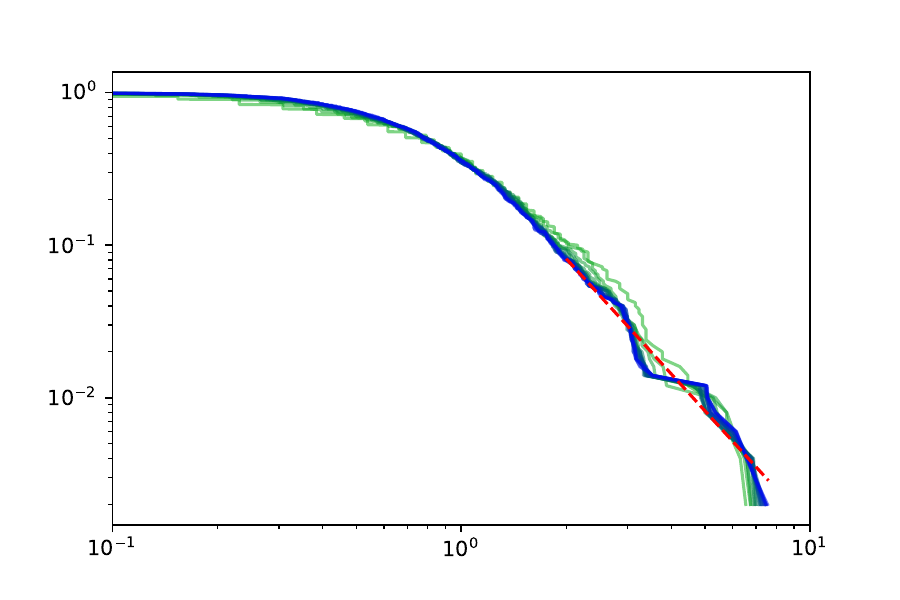}
\caption{(Left; Figure 1 of \citep{siu23_PrefAtt_betti}) An illustration of the preferential attachment and clique building mechanism. When new nodes (drawn as people) in the left column are added to the network, they are more likely to attach to already popular nodes (who have high degrees), like the light blue person in the figure. Fully connected subsets of nodes form triangles, tetrahedra and their higher-dimensional analogues in the clique complex. Note that in order to have triangles, each new node must connect to at least 2 nodes, but we only drawn one connection for each new node to keep the illustration simple. See \cref{sec:setup} for the precise definitions.
(Right) The log-log plot of the evolution of the complementary cumulative distribution functions ($\log (1-F(w))$ against $\log w$) of the mean-normalized Betti numbers at dimension 2. Green curves correspond to the distributions for complexes with fewer nodes, and blue ones, larger complexes. The dotted red line is the line of best-fit for the largest complex. Its slope is $-2.51$. Model parameters are detailed in \cref{sec:simulations}.}
\label{fig:teaser}
\end{figure}

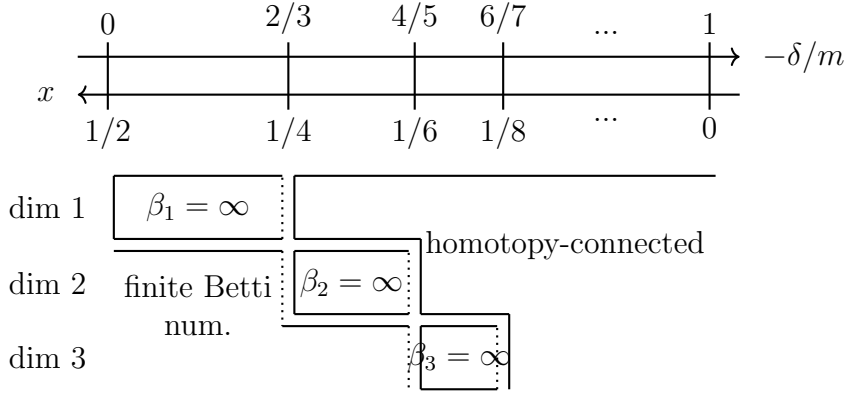
\begin{figure}
\centering
\begin{tikzpicture}[scale = 8, thick]
\draw[->,](-1.05, 0.188) -- (0.05, 0.188) node [label = right: $-\delta/m$] {};
\draw[->,](0.05, 0.125) -- (-1.05, 0.125) node [label = left: $x$] {};

\draw[](-1, 0.212) -- (-1, 0.1);
\node[label = above: 0] at (-1, 0.188) {};
\node[label = below: 1/2] at (-1, 0.125) {};
\draw[](-0.7, 0.212) -- (-0.7, 0.1);
\node[label = above: 2/3] at (-0.7, 0.188) {};
\node[label = below: 1/4] at (-0.7, 0.125) {};
\draw[](-0.49, 0.212) -- (-0.49, 0.1);
\node[label = above: 4/5] at (-0.49, 0.188) {};
\node[label = below: 1/6] at (-0.49, 0.125) {};
\draw[](-0.343, 0.212) -- (-0.343, 0.1);
\node[label = above: 6/7] at (-0.343, 0.188) {};
\node[label = below: 1/8] at (-0.343, 0.125) {};
\draw[](0, 0.212) -- (0, 0.1);
\node[label = above: 1] at (0, 0.188) {};
\node[label = below: 0] at (0, 0.125) {};

\node[label = above: ...] at (-0.172, 0.188) {};
\node[label = below: ...] at (-0.172, 0.125) {};

\draw[](-0.99, -0.115) -- (-0.71, -0.115);
\draw[](-0.99, -0.01) -- (-0.71, -0.01);
\draw[](-0.99, -0.115) -- (-0.99, -0.01);
\draw[dotted](-0.71, -0.115) -- (-0.71, -0.01);
\draw (-0.85, -0.0625) node {$\beta_1 = \infty$};
\draw[](-0.69, -0.24) -- (-0.5, -0.24);
\draw[](-0.69, -0.135) -- (-0.5, -0.135);
\draw[](-0.69, -0.24) -- (-0.69, -0.135);
\draw[dotted](-0.5, -0.24) -- (-0.5, -0.135);
\draw (-0.595, -0.188) node {$\beta_2 = \infty$};
\draw[](-0.48, -0.365) -- (-0.353, -0.365);
\draw[](-0.48, -0.26) -- (-0.353, -0.26);
\draw[](-0.48, -0.365) -- (-0.48, -0.26);
\draw[dotted](-0.353, -0.365) -- (-0.353, -0.26);
\draw (-0.416, -0.312) node {$\beta_3 = \infty$};

\draw (-1.1, -0.0625) node {dim 1};
\draw (-1.1, -0.188) node {dim 2};
\draw (-1.1, -0.312) node {dim 3};
\draw (-0.24, -0.125) node {homotopy-connected};
\draw (-0.85, -0.225) node[text width = 80, align = center] {finite Betti num.};

\draw[](-0.69, -0.01) -- (0.01, -0.01);
\draw[](-0.69, -0.115) -- (-0.48, -0.115);
\draw[](-0.48, -0.24) -- (-0.333, -0.24);
\draw[](-0.69, -0.01) -- (-0.69, -0.115);
\draw[](-0.48, -0.115) -- (-0.48, -0.24);
\draw[](-0.333, -0.24) -- (-0.333, -0.365);

\draw[](-0.99, -0.135) -- (-0.71, -0.135);
\draw[](-0.71, -0.26) -- (-0.5, -0.26);
\draw[dotted](-0.71, -0.135) -- (-0.71, -0.26);
\draw[dotted](-0.5, -0.26) -- (-0.5, -0.365);
\end{tikzpicture}
\caption{Phase transitions at different dimensions for infinite preferential attachment clique complexes for moderate $m$. The symbols $\delta$ and $m$ were introduced in the first paragraph of the Introduction and they are precisely defined in \cref{def:preferential_attachment}. The symbol $x$, defined in \cref{eqn:x}, is a monotone function of $\delta/m$. The strength of preferential attachment increases from left to right. The annotated conditions hold almost surely in their respective regions. Dotted lines indicate that the condition does not hold at the corresponding endpoint. See \cref{sec:main_results} for the precise statements.}
\label{fig:phase_diagram_infinite_complex}
\end{figure}

\new{
This work explores the limiting homological and homotopical properties of affine preferential attachment graphs.
The two pursuits are not completely independent, as homology and homotopy theories are intimately related.
}

\cref{thm:betti_ass_orderOfMagnitude} gives the asymptotic almost sure limit of the orders of magnitude of the Betti numbers of finite preferential attachment clique complexes.

We also study the topological properties of the clique complexes of \emph{infinite} preferential attachment graphs, where nodes are attached inductively by the preferential attachment mechanism \emph{ad infinitum}.

In \cref{thm:homotopy_connected}, \new{
we show that these infinite complexes are $q$-homotopy-connected if the strength of preferential attachment exceeds a $q$-dependent critical threshold. We establish our result by leveraging Barmak's sufficient condition for homotopy-connectedness in \citep{Farber23_largeRandomSimpComp}. To our knowledge, \cref{thm:homotopy_connected} is the first result on the homotopy properties of scale-free networks.

\cref{thm:infinite_betti} implies the threshold in \cref{thm:homotopy_connected} is tight. It says that the Betti numbers are infinite if the preferential attachment strength is slightly weaker than the threshold in \cref{thm:homotopy_connected}, and hence cannot be homotopy-connected, by Hurewicz's theorem (\cref{thm:hurewicz}).

These two theorems together give a negative answer to Weinberger's question in \citep{siu23_PrefAtt_betti} on the \emph{contractibility} of the infinite complexes, as Whitehead's theorem (\cref{thm:whitehead}) implies that a simplicial complex is contractible if and only if it is $q$-homotopy-connected for every $q \geq 0$. Contractibility is defined in \cref{def:contractible}.

Regarding the homological properties of the infinite complexes, these two theorems enrich our understanding of the higher-dimensional homological phase transitions for preferential attachment complexes, as they imply that the Betti numbers of the infinite complexes change from almost surely infinite to almost surely zero at the aforementioned critical thresholds. This \emph{almost sure} statement is stronger than the result in \citep{siu23_PrefAtt_betti} regarding the \emph{expected} Betti numbers. This phase transition is illustrated in \cref{fig:phase_diagram_infinite_complex}.

This shows that infinite complexes belong to a wide class of random simplicial complexes that undergo two phase transitions for each dimension, one at which many cycles emerge, and one at which most cycles become boundaries \citep{kahle14_randomCliqueComplex,kahle11_geometricComplex}. \new{The aforementioned drop of Betti numbers from infinity to zero is the second phase transition. \cref{thm:infinite_betti} also gives the critical threshold at which Betti numbers jump from almost surely finite to almost surely infinite. The two phase transitions demarcate the two endpoints of each box labeled $\beta_q = \infty$ in \cref{fig:phase_diagram_infinite_complex} for small $q$'s. This is in stark contrast with \emph{finite}} preferential attachment clique complexes, which do \emph{not} belong to this two-phase-transition class, because cycles appear much more frequently than boundaries in the finite complexes \citep{siu23_PrefAtt_betti}. However, this no longer applies for the infinite complexes, as the rates at which cycles appear and become boundaries are no longer relevant at infinity. Phase transitions of other random simplicial complexes are reviewed in \cref{sec:literature_review}.
}



\cref{thm:homotopy_connected} also affirms the observation in \citep{siu23_PrefAtt_betti} that preferential attachment favors higher-order connectivity. \cref{thm:homotopy_connected} shows that, the stronger the preferential attachment strength, the more connected the infinite complex is, in the sense that it is $q$-homotopy-connected for a larger $q$. Such connectivity arises from the fact that under strong preferential attachment effect, later nodes can fill up larger gaps among ancient nodes.


We conjecture that the Betti numbers $\beta_q^{(T)}$ of finite preferential attachment clique complexes with $T$ nodes admit a scaling limit, in the sense that the mean-normalized Betti numbers $\beta_q^{(T)}/E[\beta_q^{(T)}]$ converge in distribution as $T \to \infty$. The right panel of \cref{fig:teaser} shows the evolution of the complementary cumulative distribution functions ($1-\text{cdf}$) of the mean-normalized Betti numbers. Visually, the nearby curves suggest the distributions converge. Further details about this simulation, including model parameters and Kolmogorov-Smirnov statistics, will be detailed in \cref{sec:simulations} to support the conjectural convergence. The curves in the right panel of \cref{fig:teaser} are visually pretty straight, and the fitted slope of the limiting distribution is $-2.51$, which means the power-law distribution has a finite variance. We note that this exponent depends on the model parameters, and it is not necessarily larger than $2$. More simulation results for other model parameters than the ones for the right panel of \cref{fig:teaser} will be presented in \cref{sec:simulations}. Codes for our simulations are available at The GitHub repo \href{https://github.com/carolinerongyi/Preferential_Attachment_Clique_Complex}{\texttt{carolinerongyi/Preferential\_Attachment\_Clique\_Complex}}.

\subsection{Intuition for our Homotopy-Connectedness Result}

It was observed in \citep{siu23_PrefAtt_betti} that the growth in Betti numbers in preferential attachment clique complexes is driven by the formation of (not necessarily disjoint) small spheres, and these spheres become boundaries at a much slower rate than their formation. Shmuel Weinberger conjectured that, since all such spheres should eventually become boundaries of balls, the \emph{infinite} complexes, where the rate is no longer relevant, may be contractible. Our result formally verifies that such spheres are indeed the main obstruction to homotopy-connectedness. The infinite complexes are, however, not contractible, because the restriction on the number of edges implies certain cycles can never become boundaries.

\subsection{The Subtlety of the Limiting Topology}

The above intuition also explains the apparent paradox that Betti numbers of the finite complexes, by \cref{thm:betti_ass_orderOfMagnitude}, diverge to infinity, while the infinite complexes, by \cref{thm:homotopy_connected}, are homotopy-connected (and hence their Betti numbers vanish). For readers who are interested in \emph{persistent homology} (Cf. Chapter VII.1 of \citep{edelsbrunner10comptopo}), this discrepancy may be seen as a consequence of the facts that the persistence diagrams of preferential attachment clique complexes, filtered by node arrival, have infinitely many points with high probability, but almost surely none of them has an infinite death time.

This subtlety, however, does highlight the difficulty of studying the scaling limits of topological properties of random models. Topological operations may not commute with analytical limits, in the sense that a topological property of an analytically defined limiting object of a random model is not necessarily the same as the analytical limit of the corresponding topological properties of the random model. In symbols, we have
$$\lim \textbf{TopOp}(X_n) \neq \textbf{TopOp} (\lim X_n)$$
for many topological operations $\textbf{TopOp}$ and many (analytically defined) limiting operations $\lim$.

There are two main sources of noncommutativity:

\begin{description}
\item [The Global Nature of Topology] Since topological properties are often global in nature, locally defined analytical limits may fail to capture these properties.
\item [Small Support of Nontrivial Topological Behavior] \new{Many analytical limiting operations involve some kind of rescaling, e.g. the central limit theorem states that, upon scaling by $\sqrt{N}$, $\sum_{1 \leq i \leq N} X_i$ converges in distribution to a normal distribution under mild assumptions. In particular, $o(\sqrt{N})$ variations vanish in the limit upon rescaling.} If the nontrivial topological behavior is supported on a very small part of the spaces concerned, it may \new{vanish} upon rescaling when taking an analytical limit.
\end{description}

\new{We illustrate these sources with examples below, but before that,} we note that we have been carefully qualifying all instances of ``limit'' with ``analytical'', because some topological operations do commute with limiting procedures in \emph{category theory}, at least to a certain extent. For example, the theorem in Chapter 14.6 of \citep{may99_algtopo} 
states that, under mild conditions, the homology groups of the \emph{colimit} 
of a sequence of nested spaces are the 
colimit 
of their homology groups. 
However, for many random models, the underlying category is unclear, and so are the relevant morphims between different terms in the sequence. Even when these are clear, the categorical limit and the analytical limit may have very different properties.

Now, we give examples to illustrate the two sources of noncommutativity above, as well as the discrepancy between analytical and categorical limits.

In the first three examples below, we consider the \emph{Benjamini-Schramm limit} for random graphs \citep{benjamini01_localLimit,aldous04_localLimit}. 
This limit is local in nature in the sense that it is determined by the behavior of metric balls.

\begin{example}[Increasingly Large Cycles] To illustrate the first source of discrepancy above, consider the family of 2-regular and connected graphs. Let $G_n$ be, deterministically, the cycle graph with $n$ nodes, e.g. $G_8$ is the octagon. Let $G$ be deterministically the linear graph with countably infinitely many nodes. Then $G$ is the Benjamini-Schramm limit of $G_n$ (Cf. Figure 10 of \citep{agostini21_benjaminiSchramm}).
\end{example}

Now, each $G_n$ has (in fact, is) a cycle. Topologically, the 1-dimensional Betti number of each $G_n$ is 1. However, $G$ has no cycle, and the 1-dimensional Betti number is $0$. Therefore, the 1-dimensional Betti number (the topological property) of the Benjamini-Schramm limit (the analytical limit) of $G_n$ is different from the numerical limit (the analytical limit) of the 1-dimensional Betti numbers (the topological property) of $G_n$'s.

The limit fails to capture the loops in the $G_n$'s because \new{the limit is local in nature, while the loops in the $G_n$'s become bigger and bigger and they are eventually not contained in any metric balls of finite radii.}

\begin{example}[Preferential Attachment Graphs] To illustrate the second source of discrepancy above, note that the Benjamini-Schramm limit of preferential attachment graphs is a random tree (Cf. Theorem 1.5 of \citep{garavaglia23_preferentialAttachment_polyaPointTree})
\end{example}

All the algebraic-topological properties of a (random) tree are trivial. \new{In particular, the Betti numbers (the topological property) of the Benjamini-Schramm limit (the analytical limit) of preferential attachment clique complexes, are zero at all positive dimensions almost surely.

On the other hand, our results and those in \citep{siu23_PrefAtt_betti} show that preferential attachment graphs have non-trivial limiting topological properties. By Theorem 3 of \citep{siu23_PrefAtt_betti}, the numerical limits of the expectation (the analytical limit) of the Betti numbers (the topological property) of preferential attachment graphs are infinite, say at dimension 2, for some choices of parameters.



This discrepancy can be explained as follows. The Benjamini-Schramm limit of preferential attachment graphs is a tree because most metric balls in preferential attachment graphs are trees. While these graphs do have nontrivial topological behavior (in terms of Betti numbers), such behavior is supported on a small portion of nodes, namely, the $\Gamma_k$'s in the subsection titled ``Dominating Cycles and Proof Synopsis" of Section 1 of \citep{siu23_PrefAtt_betti}. Therefore, when one takes the Benjamini-Schramm limit by averaging over all nodes, since there are not too many such $\Gamma_k$'s, their effect gets washed out in the limit.
}

\begin{example}[Subtleties of Categorical Limits]
Our homotopy-connectedness result, \cref{thm:infinite_betti}, illustrates the discrepancy between analytical and categorical limits, as well as the subtlety of categorical limits.
\end{example}

First, as the categorical 
colimit 
of the finite complexes, the infinite complex has infinite Betti numbers almost surely for some parameters, whereas the Betti numbers of the Benjamini-Schramm limit, a random tree, vanish almost surely except at dimension 0.

Second, even though the homology groups of the infinite complex are the
colimit 
of the homology groups of the finite complexes, this does \emph{not} mean that the Betti numbers, which are the ranks (or dimensions) of the homology groups, of the \new{infinite complexes are the limits of those of the finite complexes}, as we have noted at the beginning of this subsection. This is because the 
colimit 
takes into account the inclusion maps between different finite complexes, and this information is lost when one takes the numerical limit of Betti numbers.

The noncommutativity and discrepancy are not peculiar to the Benjamini-Schramm limit. Consider, for instance, the Eden model, a model for simulating cell division \citep{eden61_edenModel,auffinger18_firstPassagePercolation}:
\begin{example}[Eden Model]
A cell in $\mathbb{R}^D$ is a volume-1 hypercube whose vertices are in $\mathbb{Z}^D$.
The Eden cluster $(A(t))_{t \in {\mathbb{Z}_+}}$ in $\mathbb{R}^D$ is the stochastic process of subsets of $\mathbb{R}^D$ defined as follows.
\begin{itemize}
\item $A(0)$ is deterministically the cell in the positive octant that contains the origin.
\item $A(t+1)$ is the union of $A(t)$ and a uniformly randomly chosen cell that is outside of $A(t)$ and shares at least a face with at least a cell in $A(t)$.
\end{itemize}
\end{example}
For the Eden model, the limiting cluster is convex \citep{cox81_edenModel_shapeLimit}, and hence has trivial topology,
while the Betti numbers of the cluster diverge to infinity with high probability \citep{manin23_topology_edenModel}. The second source of discrepancy applies in this case, as all nontrivial topological behavior is supported on a thin collar near the boundary of the limiting convex set.

With these examples in mind, one must exercise caution when discussing the limiting behavior of topological properties.

\subsection{Paper Outline}

The rest of the paper is organized as follows. First, we further contextualize our work in the literature in \cref{sec:literature_review}. Then, we state our setup in \cref{sec:setup} and the main results in \cref{sec:main_results}. We prepare for the proofs of the main results by recalling relevant results from the literature in \cref{sec:preliminaries}. We begin proving the main results in \cref{sec:proof_synopsis_intermediate} by giving a synopsis of our proofs, and establishing two intermediate results. We finish the proofs in \cref{sec:proofs}. We collect more technical proofs of intermediate results in \cref{sec:proofs_preliminary_facts}. We discuss our simulations in \cref{sec:simulations} and future directions in \cref{sec:future_directions}. We collect background materials 
in algebraic topology in the appendices. We discuss homology and homotopy theory in \cref{sec:homology_theory,sec:homotopy_theory}. We discuss the difficulty of homotopy theory at the end of \cref{sec:homotopy_theory}, and we contrast the two in \cref{sec:homology_vs_homotopy}.

\subsection{Acknowledgements}
This research was partially funded by the AFOSR grant FA9550-22-1-0091.

This research was partially supported by the Cornell University Center for Advanced Computing, which receives funding from Cornell University, the National Science Foundation, and members of its Partner Program.

The author would like to thank Gennady Samorodnitsky for valuable discussions and feedbacks on the proofs of the main results and on the manuscript; and Shmuel Weinberger for suggesting the question. He also thanks Avhan Misra for assistance with simulations. He thanks Jason Manning, Henry Adams, Lorenzo Ruffoni, and Benjamin Thompson for insightful discussion.

\section{Literature Review}
\label{sec:literature_review}

This work builds on two strands of research, namely the study of higher-order connectivity of scale-free networks, and the study of random simplicial complexes.

Connectivity of scale-free networks has been widely studied. The clustering coefficients of various models were investigated in \citep{bollobas02_scaleFree,holme02_preferentialAttachment,eggemann11_preferentialAttachment_clusteringCoefficient,ostroumova13_preferentialAttachment_clusteringCoefficient,prokhorenkova17_preferentialAttachment_clusteringCoeff}.
In \citep{garavaglia19_subgraphs_preferentialAttachment}, the growth rate of the expected number of small \emph{motifs} in preferential attachment graphs was determined. In \citep{bianconi16_growingSimplicialComplex,courtney17_growingWeightedSimplicialComplex}, some forms of higher-dimensional degrees were considered in some forms of preferential attachment simplicial complexes.

Algebraic-topological properties of scale-free networks are much less studied. The 1-dimensional Betti number of a scale-free simplicial complex was considered in \citep{oh21_bettiNumbers_trianglePreferentialAttachment_bettiNumber}. The asymptotics of the expected Betti numbers of all dimensions of preferential attachment clique complexes were established in \citep{siu23_PrefAtt_betti}. The central limit theorem for the Betti numbers of the age-dependent random connection clique complexes was established in \citep{hirsch23_ADRCM_topology} in the light-tailed regime. To our knowledge, these are the only analytical results about algebraic-topological properties of random scale-free networks in the literature. The current work is largely an extension of \citep{siu23_PrefAtt_betti}. Phase transitions of the Euler characteristic of preferential attachment graphs were numerically observed in \citep{deAmorimFilho22_complexSystem_topologicalPhaseTransition}, where the Euler characteristic is defined in terms of the sum of discrete curvature as opposed to Betti numbers. 

On the other hand, there has been a growing literature on random simplicial complexes and their algebraic-topological properties. We refer the reader to \citep{kahle14_randomSimplicialComplexes_survey,bobrowski22_randomSimplicialComplexes_survey} for comprehensive surveys. Below, we highlight results on homotopy-connectedness and phase transitions. As mentioned in \cref{sec:highlights}, many random simplicial complexes exhibit two phase transitions for each dimension. In some situations, it can also be shown that at some point after the second transition, the complex becomes homotopy-connected. This is the case for the Erdos-Renyi clique complex \citep{kahle14_randomCliqueComplex}, with a homotopy-connectedness result in \citep{kahle09_randomCliqueComplex}; and random Cech and Rips complexes \citep{kahle11_geometricComplex,kahle13_randomComplexes_betti_limitTheorems,yogeshwaran15_geometricComplexes_stationaryPP_betti}. For the Linial-Meshulam models, since they have maximal simplices (simplices that are not faces of other simplices) in only two adjacent dimensions, there is only one phase transition at the lower dimension \citep{linial06_linialMsehulam_2dim,meshulam09_linialMeshulam_dDim,kahle16_linialMeshulam_criticalRegime}. 

For percolation models, phase transition is often defined in terms of the emergence of \emph{giant cycles} as opposed to the surge in the number of cycles. Typically, there is a threshold at which giant cycles of the underlying manifold appear, possibly simultaneously at the threshold \citep{bobrowski20_percolation,duncan23_topologicalPercolation_torus}. Defined slightly differently, the phase transition of the emergence of the giant component in the graph of holes was studied in \citep{hiraoka20_percolation_codim1}.



\section{Setup}
\label{sec:setup}

We first define the preferential attachment model, which was first proposed in \citep{barabasi99_preferentialAttachment}. 
Our notations follow those in \citep{siu23_PrefAtt_betti}.

\begin{definition}[Affine Preferential Attachment Graphs; Definition 4.3.1 of \citep{garavaglia19_preferentialAttachment_thesis}]
\label{def:preferential_attachment}
Let $T$ and $m$ be positive integers with $T \geq 2$ and let $\delta \in (-m, \infty)$. The preferential attachment graph $G(T, \delta, m)$ is the random graph, with no self-loops but possibly with repeated edges, that is constructed inductively as follows.
\begin{itemize}
\item $G(2, \delta, m)$ is the deterministic graph with two nodes, indexed by 1 and 2, and $m$ edges from node 2 to node 1.
\item $G(T, \delta, m)$ is constructed by adding a node, indexed by $T$, to $G(T - 1, \delta, m)$ and $m$ edges from node $T$ to randomly chosen nodes in $G(T - 1, \delta, m)$ one by one from the following conditional distribution: 
\begin{align*}
&P(\text{the } \alpha^{th} \text{ edge from node } T \text{ points to node } v| G(T-1, \delta, m, \alpha-1))
\\=& \frac{1}{C(T, \delta, m, \alpha)}(d_{G(T-1, \delta, m, \alpha-1)}(v) + \delta)
\end{align*}
for $1 \leq \alpha \leq m$,
where $G(T-1, \delta, m, \alpha-1)$ is the graph after adding $\alpha-1$ edges from $T$ to $G(T-1, \delta, m)$, and the normalization constant is $C(T, \delta, m, \alpha) = 2(T-2) m + \alpha - 1 + (T-1) \delta$.
\item $G(\infty, \delta, m) = \cup_{T \geq 2} G(T, \delta, m)$.
\end{itemize}
\end{definition}

\begin{remark}
To the best of our knowledge, the case for $m = 1$ but with a general $\delta$ was first considered in \citep{mori02_preferentialAttachment}.
The definition above is equivalent to the model $PA_T^{(e)}(m, \delta)$ in Definition 4.3.1 of \citep{garavaglia19_preferentialAttachment_thesis} and $PA^{m, \delta}_T(d)$ defined by (1.3.65) of \citep{hofstad24_randomGraphs2}, modulo typos in the latter \citep{hofstad24_randomGraphs2errata}. For lists of other variants of the preferential attachment model, see \citep{prokhorenkova17_preferentialAttachment_clusteringCoeff} and Chapter 4.3 of \citep{garavaglia19_preferentialAttachment_thesis}.
\end{remark}

Throughout this paper,
\begin{equation}\label{eqn:x}
x(\delta, m) = 1 - \frac{1}{2 + \delta/m},
\end{equation}
\new{and our main results will be stated in terms of $x(\delta, m)$.}

\begin{remark} \new{We make two remarks on notations.}
\begin{itemize}
\item Sometimes we may drop the arguments and write $x = x(\delta, m)$ to simplify notation.
\item Despite the convention in the \new{literature of preferential attachment graphs}, we refrain from denoting this quantity by $\chi$ to avoid confusion with the Euler characteristic, a topological invariant.
\end{itemize}
\end{remark}

\new{This quantity $x$ increases with $\delta/m$ and it can be seen as an alternative measure of the strength of preferential attachment. It is related to the rate at which the probability of forming an edge between a new node and an early node converges to $0$. More precisely, we have the following proposition. (See also Exercise 8.13 and Lemma 8.17 of \citep{hofstad16_randomGraphs1} for an analogous theorem about a slightly different preferential attachment model.)

\begin{proposition}\label{prop:interpret_x}
Let $v \leq T$ be positive integers. Let $P(T \to v)$ be the probability that node $T$ is attached to node $v$ in the preferential attachment graph via at least one edge. Then there exist positive constants $c_{m, \delta}, c_{m, \delta, v}, C_{m, \delta}, C_{m, \delta, v}$ such that
$$c_{m, \delta, v} \frac{1}{T^x} = c_{m, \delta} \frac{1}{v^{1-x}T^x} \leq P(T \to v) \leq C_{m, \delta} \frac{1}{v^{1-x}T^x} = C_{m, \delta, v} \frac{1}{T^x}.$$
\end{proposition}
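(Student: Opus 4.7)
The plan is to reduce the bound on $P(T \to v)$ to the asymptotics of the expected degree $E[d_{G(T-1,\delta,m)}(v)]$ of node $v$ at time $T-1$. First, by taking a union bound for the upper bound and by retaining only the first edge from $T$ for the lower bound,
\begin{equation*}
P(\text{edge 1 from } T \text{ points to } v) \;\leq\; P(T \to v) \;\leq\; \sum_{\alpha=1}^{m} P(\text{edge } \alpha \text{ from } T \text{ points to } v).
\end{equation*}
For each $\alpha$, conditioning on $G(T-1,\delta,m,\alpha-1)$ and invoking the attachment probability in \cref{def:preferential_attachment} gives
\begin{equation*}
P(\text{edge } \alpha \text{ from } T \text{ points to } v) \;=\; \frac{E[d_{G(T-1,\delta,m,\alpha-1)}(v)] + \delta}{C(T,\delta,m,\alpha)}.
\end{equation*}
The denominator $C(T,\delta,m,\alpha) = 2(T-2)m + \alpha - 1 + (T-1)\delta$ is $\Theta(T)$ uniformly in $1 \leq \alpha \leq m$, and $d_{G(T-1,\delta,m,\alpha-1)}(v) - d_{G(T-1,\delta,m)}(v) \in [0, m-1]$ deterministically. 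Hence the display above sandwiches $P(T \to v)$ between constants (depending only on $m$ and $\delta$) times $(E[d_{G(T-1,\delta,m)}(v)] + \delta)/T$, up to an $O(1/T)$ additive error.

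The key input is then the standard asymptotic
\begin{equation*}
E[d_{G(T-1,\delta,m)}(v)] \;\asymp\; \left(\frac{T}{v}\right)^{1-x}
\end{equation*}
for the expected degree. This in turn follows from an exact ratio-of-Gamma-functions expression obtained by solving the linear recurrence for $E[d_{G(T,\delta,m)}(v)]$ induced by \cref{def:preferential_attachment} (see the analogous Exercise 8.13 and Lemma 8.17 of \citep{hofstad16_randomGraphs1} for a closely related model), together with the elementary two-sided bound $\Gamma(s+a)/\Gamma(s) \asymp s^{a}$ valid uniformly for $s \geq 1$. Noting that $1 - x = 1/(2+\delta/m)$ is exactly the classical preferential-attachment exponent, one recovers the familiar degree scaling with constants that can be taken uniform in $1 \leq v < T$.

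Substituting the degree asymptotic into the earlier sandwich yields
\begin{equation*}
P(T \to v) \;\asymp\; \frac{(T/v)^{1-x}}{T} \;=\; \frac{1}{v^{1-x}\,T^{x}},
\end{equation*}
which is precisely the stated bound. The main technical step is the computation of $E[d_{G(T,\delta,m)}(v)]$ from the recurrence and the uniformity of the Gamma-ratio bound down to small $v$; this is handled by the elementary observation that the initial degree of every node $v < T$ is at least $m$ and $\delta > -m$, so that $E[d_{G(T-1,\delta,m)}(v)] + \delta \geq m + \delta > 0$, which lets one absorb the boundary cases $v \in \{1, \ldots, v_0\}$ into the constants $c_{m,\delta}$ and $C_{m,\delta}$.
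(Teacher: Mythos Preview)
Your argument is correct. The high-level structure---lower bound from a single edge, upper bound from a union over the $m$ edges---matches the paper's exactly. The difference is in how you estimate the single-edge probability: the paper simply invokes Lemma~1 of \citep{garavaglia19_subgraphs_preferentialAttachment}, which directly outputs bounds of the form $c_{m,\delta}\,v^{-(1-x)}T^{-x}$ for the probability that a fixed labelled edge from $T$ lands at $v$, whereas you re-derive this from the expected-degree recurrence and the Gamma-ratio asymptotic (the route sketched in Exercise~8.13 and Lemma~8.17 of \citep{hofstad16_randomGraphs1}). Your approach is more self-contained but requires carrying through the uniformity in $v$ by hand; quoting Garavaglia's lemma packages that uniformity for free. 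Both routes are standard and yield the same constants' dependence on $m,\delta$.
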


\begin{proof}
We merely sketch the proof because we will not use this proposition in the rest of the paper. For the lower bound, apply Lemma 1 of \citep{garavaglia19_subgraphs_preferentialAttachment} with $\ell = 1$, $\mathbf{u}_\ell = (v)$, $\mathbf{v}_\ell = (T)$ and $\mathbf{j}_\ell = (1)$. For the upper bound, sum over the upper bounds given by the lemma for the same choices of $\ell, \mathbf{u}_\ell$, $\mathbf{v}_\ell$ but all possible choices of $\mathbf{j}_\ell$, namely $(1), ..., (m)$. Note that the lemma has a typo: nodes in $\mathbf{u}_\ell$ should precede those in $\mathbf{v}_\ell$.
\end{proof}

We conclude this section by defining preferential attachment clique complexes. We recall algebraic-topological notions like simplicial complexes in \cref{sec:homology_theory}.
}

\begin{definition}[Preferential Attachment Clique Complexes]
Let $m$ be a positive integer and $\delta \in (-m, \infty)$. For $2 \leq T \leq \infty$, the preferential attachment clique complex $X(T, \delta, m)$ is the simplicial complex with the same nodes as $G(T, \delta, m)$, and $\{v_0, ...,v_q\}$ is a simplex  in $X(T, \delta, m)$ if and only if there is at least one edge between each pair of distinct nodes in $\{v_0, ..., v_q\}$.
\end{definition}

\section{Main Results}
\label{sec:main_results}

Before we state our results, we remark on some probabilistic terminologies and conventions.
\begin{itemize}
\item A property is said to hold \emph{almost surely} if it holds with probability 1.
\item A property is said to hold \emph{asymptotically almost surely} if the probability that it holds converges to 1.
\item Throughout this paper, $C$ and $c$ denote generic constants which may change from line to line or within the same line, unless when they are explicitly stated to be specific constants (the only such exception is in the definition of $B$ right after \cref{eqn:choose_C}). Their subscripts denote variables on which they may depend.
\end{itemize}

Our first main result concerns the scaling limit of the Betti numbers.

\begin{theorem}\label{thm:betti_ass_orderOfMagnitude}
Let $\beta_q(X(T, \delta, m))$ be the Betti number of $X(T, \delta, m)$ at dimension $q$.
Suppose $q \geq 1$, $m \geq 2q$, $x = x(\delta, m) \leq \frac{1}{2q}$, and $\omega(T) \to \infty$. Then the following inequalities hold asymptotically almost surely.\begin{align*}
\frac{1}{\omega(T)} &\leq \frac{\beta_q(X(T, \delta, m))}{T^{1-2qx(\delta, m)}} \leq \omega(T) \qquad \text{ if } q > 1, x < \frac{1}{2q}
\\
\frac{1}{\omega(T)} &\leq \frac{\beta_q(X(T, \delta, m))}{\log T} \leq \omega(T) \qquad \text{ if } q > 1, x = \frac{1}{2q}
\\
-\omega(T) \log T &\leq \beta_q(X(T, \delta, m)) - (m-1)T \leq 1
\qquad \text{ if } q = 1, x < \frac{1}{2q}
\\
-\omega(T) (\log T)^3 &\leq \beta_q(X(T, \delta, m)) - (m-1)T \leq 1
\qquad \text{ if } q = 1, x = \frac{1}{2q}.
\end{align*}
\end{theorem}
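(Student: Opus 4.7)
The plan is to upgrade the order-of-magnitude estimates on $\mathbb{E}[\beta_q]$ established in \citep{siu23_PrefAtt_betti} to asymptotic-almost-sure statements, using Markov's inequality on the mean whenever possible, and resorting to a second-moment method only for the lower bound when $q > 1$. The upper bounds in all four cases are the easy direction. For $q > 1$, since $\beta_q \geq 0$, Markov's inequality combined with $\mathbb{E}[\beta_q(X(T,\delta,m))] \asymp T^{1-2qx}$ (respectively $\asymp \log T$ at $x = 1/(2q)$) from \citep{siu23_PrefAtt_betti} immediately yields $\beta_q \leq \omega(T) T^{1-2qx}$ (respectively $\leq \omega(T) \log T$) asymptotically almost surely. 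For $q = 1$, the bound $\beta_1 \leq (m-1)T + 1$ is deterministic: the preferential attachment graph is connected by construction, its simple $1$-skeleton $G$ has at most $m(T-1)$ edges on $T$ vertices, and $\beta_1(X) \leq \beta_1(G) = |E(G)| - T + 1 \leq (m-1)T - m + 1$.

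The lower bound for $q = 1$ exploits this same deterministic upper bound. Since $(m-1)T + 1 - \beta_1(X) \geq 0$ and the first-moment analysis in \citep{siu23_PrefAtt_betti} gives $\mathbb{E}[(m-1)T - \beta_1(X)] = O(\log T)$ (respectively $O((\log T)^3)$ at the critical $x = 1/2$), Markov's inequality applied to this non-negative random variable produces the stated asymptotic-almost-sure lower bound. Equivalently, one can argue via the Euler relation $\beta_1(X) \geq \beta_1(G) - f_2(X) = (m-1)T - m + 1 - R - f_2(X)$, where $R$ counts edge repetitions in the preferential attachment process and $f_2$ counts triangles in $X$, then bound $\mathbb{E}[R + f_2]$ by the same order using the subgraph-count estimates already needed in \citep{siu23_PrefAtt_betti}.

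The main technical step is the lower bound when $q > 1$, where Markov's inequality runs in the wrong direction and a second-moment argument is unavoidable. Following the ``dominating cycles'' heuristic highlighted in Section~1 of \citep{siu23_PrefAtt_betti}, I would introduce a count $N$ of sufficiently isolated sphere-like configurations $\Gamma_k$---small realizations of boundaries of $(q+1)$-simplices whose non-fillability in $X$ forces each to contribute one homologically independent class, so that $N \leq \beta_q(X)$. One then verifies that $\mathbb{E}[N]$ matches the claimed order $T^{1-2qx}$ (or $\log T$ at criticality), essentially by re-reading the mean computations of \citep{siu23_PrefAtt_betti}, and that $\mathrm{Var}(N) = o(\mathbb{E}[N]^2)$, after which Chebyshev's inequality delivers $\beta_q \geq N \geq \mathbb{E}[N]/\omega(T)$ asymptotically almost surely. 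The variance estimate is the principal obstacle: joint occurrence probabilities of two configurations in preferential attachment depend on the interleaving of their vertex ``ages'', necessitating a careful case analysis in the spirit of \citep{garavaglia19_subgraphs_preferentialAttachment}, and the critical regime $x = 1/(2q)$ demands tight tracking of logarithmic factors throughout.
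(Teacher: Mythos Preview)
Your treatment of the upper bounds and the entire $q=1$ case matches the paper's approach: Markov's inequality on the mean Betti number (via the link decomposition of Proposition~14 in \citep{siu23_PrefAtt_betti}) for the upper bound when $q>1$, and the Morse/Euler inequality $-|F|-B\le \beta_1 - (|E|-|V|)\le 1$ together with Markov on the expected triangle and biangle counts for $q=1$.

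For the $q>1$ lower bound, however, the paper does \emph{not} run a second-moment argument on a motif count. Instead it fixes a \emph{single} induced $S^{q-1}$ (whose almost-sure existence in the infinite graph is proven by an easy induction using infinitely many common neighbors) and then shows, via a Chernoff bound, that this one sphere acquires $\gtrsim T^{1-2qx}$ common later neighbors. The key device is the Polya-urn representation of the preferential attachment graph: conditioning on the Beta variables $\psi_1,\dots,\psi_T$ makes all edges conditionally independent, so the number of common neighbors of a fixed vertex set becomes a sum of conditionally independent Bernoulli variables with explicitly controlled parameters, and a standard Chernoff bound gives concentration. Plugging this into the decomposition $\beta_q \ge \sum_t \mathbf{1}[\mathcal{S}(S,q,s,t)] - \sum_t b^{(t)}_{IK}(S,s) - \sum_t \beta_q(L^{(t)})$ and controlling the two subtracted sums by Markov (their means are $O(T^{1-(2q+1)x})$ and $O(T^{1-2(q+1)x})$, both of strictly smaller order) finishes the proof.

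Compared to your plan, this buys a complete avoidance of the variance computation you flag as the principal obstacle: no case analysis on pairs of overlapping configurations, no tracking of joint occurrence probabilities across interleaved ages. Your second-moment route is plausible in principle, but you have left exactly the hard step unproven, and in preferential attachment the degree reinforcement creates genuine positive correlations between motifs sharing early vertices, so showing $\mathrm{Var}(N)=o(\mathbb{E}[N]^2)$ would require real work. The paper's conditional-independence trick via the Polya urn is the idea that makes the lower bound short.
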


Our second main result concerns the homotopy-connectedness of $G(\infty, \delta, m)$:

\begin{theorem}\label{thm:homotopy_connected}
If $q \geq 1$, $m \geq 2(q+1)$ and $x(\delta, m) \leq \frac{1}{2q+2}$, then $X(\infty, \delta, m)$ is $q$-homotopy-connected almost surely.
\end{theorem}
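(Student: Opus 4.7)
The plan is to verify Barmak's sufficient criterion for $q$-homotopy-connectedness (invoked in \citep{Farber23_largeRandomSimpComp}): a flag complex $X(G)$ is $q$-homotopy-connected provided that every subset of at most $2q+2$ vertices of $G$ admits a common neighbor in $G$. Since $X(\infty,\delta,m)$ is by construction the flag complex of $G(\infty,\delta,m)$ and its vertex set is countable, countable subadditivity reduces the problem to showing that, for each fixed finite subset $S \subseteq \mathbb{N}$ with $|S| = k \leq 2q+2$, there almost surely exists a later-arriving node attached to every element of $S$. The hypothesis $m \geq 2(q+1) = 2q+2$ is precisely what makes this feasible in principle: a single newly arrived node has enough edges to potentially reach every element of such an $S$.

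Fix $S = \{v_1, \ldots, v_k\}$ and, for each $T > \max S$, let $A_T$ be the event that node $T$ sends at least one edge to each $v_i$. To lower-bound $P(A_T \mid \mathcal{F}_{T-1})$ I would consider the explicit routing scenario in which, for $i = 1, \ldots, k$, the $i$-th of the $m$ edges emanating from $T$ is directed to $v_i$: by \cref{def:preferential_attachment}, each such edge assignment has conditional probability at least $c/T^x$, because $d(v_i), \alpha, \delta$ are all $O_S(1)$ while the normalization $C(T, \delta, m, \alpha) = \Theta(T)$. Chaining these $k$ estimates yields $P(A_T \mid \mathcal{F}_{T-1}) \geq c_S T^{-kx}$. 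The hypothesis $x \leq 1/(2q+2)$ ensures $kx \leq 1$ for every admissible $k$, so $\sum_T T^{-kx}$ diverges, and a conditional (L\'evy-type) second Borel--Cantelli lemma applied to $\{A_T\}_{T > \max S}$ yields $P(A_T \text{ i.o.}) = 1$. A countable union over admissible $S$ then verifies Barmak's criterion almost surely.

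The main obstacle I anticipate is matching the exact form of Barmak's criterion to the probabilistic estimate: the $2q+2$ threshold is precisely what aligns with the exponent $kx$ arising in the attachment probability, and it is this alignment that produces the condition $x \leq 1/(2q+2)$, so pinning down the right combinatorial input from \citep{Farber23_largeRandomSimpComp} is the crux of the argument. A secondary subtlety is that the $m$ edges from a single node $T$ are not independent under the sequential attachment law; the deterministic routing argument above sidesteps this by invoking the one-edge lower bound iteratively along a single favorable realization, and the factor-of-$k$ loss in the exponent is exactly what the hypothesis on $x$ absorbs.
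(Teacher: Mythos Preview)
Your high-level strategy matches the paper's: verify Barmak's criterion (\cref{thm:conic}) by showing that every set of at most $2q+2$ vertices has a common neighbor, then take a countable union. The paper proves the common-neighbor statement via the Polya urn representation and a Chernoff bound (\cref{prop:common_neighbors_concentration} and \cref{cor:common_neighbors}); you instead try to work directly from the sequential definition and invoke a conditional Borel--Cantelli lemma. That alternative route is legitimate, but the probability estimate you give does not follow from the justification you offer.

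You assert that the $i$-th edge from node $T$ lands on $v_i$ with conditional probability at least $c/T^x$ ``because $d(v_i), \alpha, \delta$ are all $O_S(1)$ while the normalization $C(T,\delta,m,\alpha) = \Theta(T)$.'' But a numerator of order $O_S(1)$ over a denominator of order $\Theta(T)$ yields a lower bound of order $c/T$, not $c/T^x$ (recall $x<1$). With that bound the product over $k$ edges is $c_S/T^k$, and $\sum_T T^{-k}$ converges for every $k\geq 2$, so the conditional Borel--Cantelli argument collapses precisely where you need it. The exponent $x$ does \emph{not} come from the normalization alone; it comes from the fact that the degree of a fixed vertex $v_i$ at time $T$ grows like $T^{1-x}$. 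You must invoke the almost-sure degree growth $d_T(v_i)/T^{1-x} \to \xi_{v_i} > 0$ (a standard result for affine preferential attachment, e.g.\ in \citep{hofstad16_randomGraphs1}) to get the numerator of order $T^{1-x}$ and hence the single-edge conditional probability $\gtrsim c_S T^{-x}$ for all large $T$ almost surely. Once this is in place, chaining $k$ edges gives $P(A_T \mid \mathcal{F}_{T-1}) \gtrsim c_S T^{-kx}$, the series $\sum T^{-kx}$ diverges under $kx\leq 1$, and the L\'evy extension of Borel--Cantelli finishes as you intended.

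In short: the architecture is correct and essentially the paper's, but the key quantitative input---the $T^{-x}$ single-edge rate---requires the degree-growth theorem, not the bounded-degree claim you wrote. The paper sidesteps this by passing to the Polya urn model, where the analogous rate falls out of the concentration of the $S_v$'s (\cref{prop:S_asymptotics}).
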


In particular, all Betti numbers of $X(\infty, \delta, m)$ vanish up to dimension $q$, inclusively.

Our third main result addresses the tightness of the condition in the previous theorem.

\begin{theorem}\label{thm:infinite_betti}
Suppose $q \geq 1$ and $m \geq 2q$ and $x = x(\delta, m) > \frac{1}{2q+2}$. Then the $q^{th}$ Betti number of $X(\infty, \delta, m))$, in field coefficients, is almost surely infinite if $x \leq \frac{1}{2q}$; it is almost surely finite if $x > \frac{1}{2q}$ and $q \geq 2$.
\end{theorem}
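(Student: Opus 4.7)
The proof splits at $x = 1/(2q)$: finiteness above, infinitude in the window $1/(2q+2) < x \leq 1/(2q)$.

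The finite half ($x > 1/(2q)$, $q \geq 2$) is comparatively soft. Since $X(\infty) = \bigcup_T X(T)$ is a filtered union of subcomplexes, homology with field coefficients commutes with the colimit, so $H_q(X(\infty)) \cong \varinjlim_T H_q(X(T))$. Tracking ranks through the directed system, the image of $H_q(X(T))$ in $H_q(X(\infty))$ has dimension at most $\beta_q(X(T'))$ for every $T' \geq T$, and these images exhaust $H_q(X(\infty))$; hence $\beta_q(X(\infty)) \leq \liminf_T \beta_q(X(T))$. Theorem 3 of \citep{siu23_PrefAtt_betti} bounds $E[\beta_q(X(T))]$ uniformly in $T$ precisely in the regime $q \geq 2$ and $x > 1/(2q)$, so Fatou's lemma gives $E[\beta_q(X(\infty))] \leq \liminf_T E[\beta_q(X(T))] < \infty$, and $\beta_q(X(\infty)) < \infty$ almost surely.

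The infinite half requires building infinitely many homologically independent persistent cycles in $X(\infty)$. The decisive feature of preferential attachment is that every edge points from a newer node to an older one, so a $q$-sphere present in $X(T_0)$ can only be killed if future nodes create filling chains. The simplest such spheres in a clique complex are boundaries of $(q+1)$-dimensional cross-polytopes, supported on $2q+2$ vertices; a cone filling of one such sphere requires a single new node adjacent to all $2q+2$ of these vertices. By \cref{prop:interpret_x} (and the joint estimates of Lemma 1 of \citep{garavaglia19_subgraphs_preferentialAttachment} used in its proof), the probability that a specific node at time $T$ provides such a cone is at most $C \, T^{-(2q+2)x}$, and the sum over $T$ converges exactly when $(2q+2)x > 1$, i.e., $x > 1/(2q+2)$; more elaborate fillings using several new nodes impose even stronger joint adjacency constraints and remain summable in the same regime. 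Consequently each candidate cycle persists as a non-boundary with probability bounded below by a positive constant. Applying this survival bound to the ``dominating cycle'' configurations of \citep{siu23_PrefAtt_betti} across geometrically spaced windows of node indices, a first-moment or Borel--Cantelli argument then produces infinitely many persistent cycles almost surely.

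The principal obstacle is homological independence: sums of individually persistent cycles could still bound. I would address this by constraining the candidate cycles to have pairwise vertex-disjoint supports drawn from disjoint index windows, so that any filling chain of a nontrivial combination decomposes along the block structure and must cancel within each block, contradicting persistence of any surviving block summand. Making the disjoint-support family almost surely infinite is the main technical burden and combines the cone-survival estimate above with the first-moment lower bounds of \citep{siu23_PrefAtt_betti}. Sharpness of the threshold $x = 1/(2q+2)$ then matches the homotopy-connectedness result of \cref{thm:homotopy_connected}, as it must by Hurewicz's theorem.
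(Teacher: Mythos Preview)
Your finite half is correct and matches the paper. The infinite half, however, has a real gap. You control only \emph{cone} fillings of your candidate spheres and then assert that ``more elaborate fillings \dots remain summable,'' but a $q$-cycle $z$ becomes a boundary in $X(\infty)$ whenever $z = \partial c$ for \emph{any} $(q{+}1)$-chain $c$, and such $c$ may involve arbitrarily many simplices with no a priori bound on its combinatorial complexity; there is no summable union bound over all possible fillings. The block-independence step fails for the same reason: even with vertex-disjoint cycle supports, a filling of a nontrivial linear combination is free to use future simplices that touch several blocks simultaneously, so it need not decompose along the block structure.

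The paper's route is much shorter and uses the hypothesis $x > 1/(2q+2)$ structurally rather than as a mere sharpness check. By \cref{thm:mean_betti} one has $\sum_t E[\beta_q(L^{(t)})] < \infty$ once $1 - 2(q+1)x < 0$, so $\beta_q(L^{(t)}) = 0$ eventually almost surely, and the Mayer--Vietoris segment $H_q(L^{(t)}) \to H_q(X^{(t-1)}) \to H_q(X^{(t)})$ then forces the inclusion-induced map on $H_q$ to be injective for all large $t$. Hence $\beta_q(X(T))$ is eventually nondecreasing, \cref{lem:homology_infinite_complex} identifies $H_q(X(\infty))$ with the nested union of the $H_q(X(T))$, and \cref{thm:betti_ass_orderOfMagnitude} supplies $\beta_q(X(T)) \to \infty$, which upgrades to almost-sure divergence by monotonicity. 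Eventual injectivity is the idea you are missing: it turns ``persistence'' from something you must engineer by hand into a one-line consequence of the link estimate, and it is exactly what the assumption $x > 1/(2q+2)$ buys.
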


\begin{remark}
See \cref{def:homology_coefficients} for the definition of homology with coefficients.
\end{remark}

Indeed, if the $q^{th}$ Betti number in rational coefficients is infinite, then by Corollary 3A.6a of \citep{hatcher02_algtopo} (\cref{prop:rationalHomology}), the $q^{th}$ homology group (in integer coefficients) cannot vanish, and hence the space cannot be $q$-homotopy-connected by Hurewicz's theorem (\cref{thm:hurewicz}).
 
\section{Preliminaries}
\label{sec:preliminaries}

In this section, we state the preliminary facts on which the proofs of our main results are based. 
Topological facts are collected in the appendices.

\subsection{An Equivalent Formulation of Finite Preferential Attachment Graphs}

The Polya urn's interpretation of the preferential attachment graph affords an equivalent formulation of the random graph process, which is sometimes more amenable to analytical computations.

\begin{definition}[Finite Polya Preferential Attachment Graphs]\label{def:polya_prefAtt}
Let $m$ be a positive integer and $\delta \in (-m, \infty)$. Let $T \geq 2$ be a positive integer. The finite Polya preferential attachment graph $G_\text{Polya}(T, \delta, m)$ is defined as follows.

\begin{itemize}
\item Let $\psi_1, ..., \psi_T$ be independent random variables with $\psi_1 = 1$ and $\psi_t \sim \text{Beta}(m + \delta, (2m + \delta)t - (3m + \delta))$.
\item Let
\begin{align*}
\varphi_v &= \psi_v \prod_{v < t \leq T} (1-\psi_t)
\\
S_v &= \sum_{i \leq v} \varphi_i
\\&= \prod_{v < t \leq T} (1-\psi_t)
\\
I_v &= [S_{v-1}, S_v).
\end{align*}
\item Let $U_{t, \alpha}$, where $1 \leq t \leq T$ and $1 \leq \alpha \leq m$, be an array of conditionally independent random variables given $\psi_1, ..., \psi_T$, and $U_{t, \alpha} \sim \text{Unif}(0, S_{t-1})$. Let $v_{t, \alpha}$ be the unique index $v$ such that $U_{t, \alpha} \in I_v$.
\item $G_\text{Polya}(T, \delta, m)$ consists of $T$ nodes, indexed by $1, ..., T$, and one edge from node $t$ to node $v_{t, \alpha}$ for each $t$ and $\alpha$.
\end{itemize}
\end{definition}

\begin{theorem}[Theorem 4.4.3 of \citep{garavaglia19_preferentialAttachment_thesis}]\label{thm:polya_equivalent}
If $T$ is finite, then $G(T, \delta, m)$ and $G_\text{Polya}(T, \delta, m)$ have the same distribution.
\end{theorem}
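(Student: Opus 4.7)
The plan is to establish the distributional equivalence by induction on the sequence in which edges are added, indexed lexicographically by $(t, \alpha)$. The base case $T = 2$ is trivial since both constructions produce the deterministic graph on two vertices with $m$ parallel edges. For the induction step, I want to show that in the Polya model, after conditioning on the graph constructed so far, the conditional probability that the next edge points to vertex $v$ matches the preferential attachment formula in \cref{def:preferential_attachment}.

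The key identity is the stick-breaking observation that conditionally on $\psi_1, \ldots, \psi_T$, each $U_{t,\alpha}$ is uniform on $[0, S_{t-1})$, so
$$P(v_{t,\alpha} = v \mid \psi_1, \ldots, \psi_T) = \varphi_v / S_{t-1}.$$
Therefore, marginalizing out the $\psi$'s gives
$$P(v_{t,\alpha} = v \mid \text{graph so far}) = E\!\left[\varphi_v / S_{t-1} \,\big|\, \text{graph so far}\right],$$
and I want this to equal $(d(v) + \delta) / C(T, \delta, m, \alpha)$. The main analytic tool is beta-binomial conjugacy: if I can show inductively that the posterior distribution of $(\psi_1, \ldots, \psi_{t-1})$ given the edges drawn so far is again a product of beta distributions, with shape parameters updated in a clean way (the first parameter of $\psi_v$ increased by the current degree shift $d(v) - m$, and the second parameter increased by the number of edges landing after $v$), then the ratio $E[\varphi_v/S_{t-1} \mid \cdot]$ can be computed as a ratio of beta moments, which is a ratio of Gamma functions that telescopes.

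The specific choice $\psi_t \sim \text{Beta}(m + \delta, (2m + \delta)t - (3m + \delta))$ is engineered precisely so that this telescoping produces the denominator $C(T, \delta, m, \alpha) = 2(T-2)m + \alpha - 1 + (T-1)\delta$. A clean way to do the bookkeeping is to note that $(2m+\delta)t - (3m+\delta)$ is exactly the total weight $\sum_{v<t}(d(v)+\delta)$ contributed by nodes $1, \ldots, t-1$ to the normalization of node $t$, so the Beta parameters track ``own weight versus subsequent total weight'' in a way that is preserved under Bayesian updating.

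The main obstacle will be the bookkeeping in the induction, namely carefully verifying that the posterior Beta parameters update by exactly the right integer shifts after each edge is observed, so that the telescoping produces both the numerator $d(v) + \delta$ (independent of the route by which those $d(v)$ edges arrived) and the denominator with its delicate dependence on $\alpha$. Since this theorem appears as Theorem 4.4.3 of \citep{garavaglia19_preferentialAttachment_thesis}, I would follow that argument in the formal write-up, merely using the present sketch to reassure the reader that the equivalence is a standard Polya-urn computation rather than a deep fact.
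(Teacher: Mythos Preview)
The paper does not prove this theorem: it is quoted as a known result from \citep{garavaglia19_preferentialAttachment_thesis} (with the remark that the $\delta=0$ case is a special case of \citep{berger14_prefAtt_polyaUrn} and that it is equivalent to a result in \citep{hofstad24_randomGraphs2}), and no argument is given beyond the citation. Your proposal correctly recognizes this and defers to the cited source; the beta--binomial conjugacy sketch you outline is indeed the standard mechanism behind such Polya-urn equivalences, so your plan is entirely appropriate here.
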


\begin{remark}
The case for $\delta = 0$ is a special case of Theorem 2.1 of \citep{berger14_prefAtt_polyaUrn}. The above theorem is equivalent to Theorem 5.10 of \citep{hofstad24_randomGraphs2} modulo typos.
\end{remark}

Since the two graphs have the same distribution, we drop the subscript ``Polya'' hereafter.

The $\psi_t$'s decouple the dependency of different edges:

\begin{lemma}\label{lem:connection_prob}
Given $\psi_1, ..., \psi_T$, all edges are conditionally independent, and the conditional probability that a node $t$ is connected to a node $v < t$ via its $\alpha^{th}$ edge is
$\psi_v \frac{S_v}{S_{t-1}}$,
which is independent of $\alpha$.
\end{lemma}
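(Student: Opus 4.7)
The plan is to extract the statement directly from the explicit construction in \cref{def:polya_prefAtt}. Two things need to be shown: (i) the edge endpoints $v_{t,\alpha}$ are conditionally independent given $\psi_1, \ldots, \psi_T$, and (ii) the conditional probability has the asserted form.

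For (i), I would observe that conditional on $\psi_1, \ldots, \psi_T$ the family $\{U_{t,\alpha}\}_{t,\alpha}$ is independent by construction, and each $v_{t,\alpha}$ is a deterministic function of the single variable $U_{t,\alpha}$ together with the conditioned-upon $\psi$'s, namely the unique index $v$ such that $U_{t,\alpha} \in I_v = [S_{v-1}, S_v)$. Measurable functions of independent random variables remain independent, so the conditional independence of all $v_{t,\alpha}$, and hence of all edges, follows.

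For (ii), given the $\psi$'s we have $U_{t,\alpha} \sim \text{Unif}(0, S_{t-1})$, so
$$P(v_{t,\alpha} = v \mid \psi_1, \ldots, \psi_T) = \frac{|I_v|}{S_{t-1}} = \frac{S_v - S_{v-1}}{S_{t-1}} = \frac{\varphi_v}{S_{t-1}}.$$
The product formula $S_v = \prod_{v < s \leq T}(1-\psi_s)$ already stated in \cref{def:polya_prefAtt} yields $\varphi_v = \psi_v \prod_{v < s \leq T}(1-\psi_s) = \psi_v S_v$, which rewrites the conditional probability as $\psi_v S_v / S_{t-1}$. Independence of $\alpha$ is then immediate, since neither the distribution of $U_{t,\alpha}$ nor the resulting formula involves $\alpha$.

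There is no genuine obstacle here; the lemma is essentially a direct unpacking of the Polya construction, and the only point requiring minor care is matching index conventions, in particular that $S_v$ is the product over strictly later indices $s > v$, so that $\psi_v S_v$ and $\varphi_v$ coincide.
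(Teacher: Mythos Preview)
Your proposal is correct and is exactly the ``direct verification'' the paper invokes: you unpack \cref{def:polya_prefAtt}, use the conditional independence of the $U_{t,\alpha}$'s, and read off $|I_v|/S_{t-1} = \varphi_v/S_{t-1} = \psi_v S_v/S_{t-1}$. There is nothing to add; the paper simply cites this computation rather than writing it out.
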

\begin{proof} Direct verification, see also (5.3.32) and (5.3.33) of \citep{hofstad24_randomGraphs2}.
\end{proof}

We have the following variant of Proposition 5.18 of \citep{hofstad24_randomGraphs2} and Lemma 3.1 of \citep{berger14_prefAtt_polyaUrn} regarding the asymptotics of $S_v$'s.
\begin{proposition}\label{prop:S_asymptotics}
There exists a positive constant $C_{m,\delta}$ such that for every $0 < \varepsilon < 1$ and, every positive integer $T$, with probability at least $1 - \varepsilon$ it holds that
$$\max_{1 \leq v \leq T} {\lvert \log S_v - x \log (v/T) \rvert} \leq \frac{C_{m,\delta}}{\sqrt{\varepsilon}},$$ and hence there exist positive constants $c_{m, \delta, \varepsilon}, C_{m, \delta, \varepsilon}$ such that for all $1 \leq v \leq T$,
$$c_{m, \delta, \varepsilon} \leq \frac{S_v}{(v/T)^x} \leq C_{m, \delta, \varepsilon}.$$
\end{proposition}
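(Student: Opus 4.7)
Since the $\psi_t$'s are independent and $\log S_v = \sum_{v < t \leq T} \log(1 - \psi_t)$, I would split $\log S_v - x \log(v/T)$ into a deterministic mean-approximation error and a random centered sum, and control the two separately, combining via the triangle inequality.

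For the mean, since $\psi_t \sim \mathrm{Beta}(m+\delta,\, (2m+\delta)t - (3m+\delta))$, a direct computation gives $E[\psi_t] = (m+\delta)/((2m+\delta)t - 2m) = x/t + O_{m,\delta}(1/t^2)$, and the higher moments satisfy $E[\psi_t^k] = O_{m,\delta}(1/t^k)$ for each $k \geq 2$. Either a Taylor expansion of $\log(1-\psi_t)$ or the digamma identity $E[\log(1-\psi_t)] = \Psi(b_t) - \Psi(a+b_t)$ (with $a = m+\delta$ and $b_t = (2m+\delta)t - (3m+\delta)$) then yields $E[\log(1-\psi_t)] = -x/t + O_{m,\delta}(1/t^2)$. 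Summing over $v < t \leq T$ and using $\sum_{v < t \leq T} 1/t = \log(T/v) + O(1/v)$ gives $E[\log S_v] = x \log(v/T) + O_{m,\delta}(1)$ uniformly in $v$.

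For the fluctuation, set $Y_t := \log(1-\psi_t) - E[\log(1-\psi_t)]$ and $M_v := \log S_v - E[\log S_v] = \sum_{v < t \leq T} Y_t$. The trigamma identity $\mathrm{Var}(\log(1-\psi_t)) = \Psi'(b_t) - \Psi'(a+b_t) = O_{m,\delta}(1/t^2)$ yields $\mathrm{Var}(M_1) = \sum_{t=2}^{T} O(1/t^2) \leq K_{m,\delta}$ uniformly in $T$. Reindexing by $k = T - v$, the sequence $(M_{T-k})_{0 \leq k \leq T-1}$ is a running sum of independent mean-zero increments, so Kolmogorov's maximal inequality gives $P(\max_{1 \leq v \leq T} |M_v| \geq \lambda) \leq K_{m,\delta}/\lambda^2$. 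Taking $\lambda = \sqrt{K_{m,\delta}/\varepsilon}$ and combining with the mean estimate gives the first claim (absorbing the $O(1)$ mean error into $C_{m,\delta}/\sqrt{\varepsilon}$ is valid because $\varepsilon < 1$); the second then follows by exponentiating. The main obstacle is the Beta moment bookkeeping: confirming that the logarithmic singularity of $\log(1-u)$ at $u = 1$ integrates against the Beta density (which it does, since the density contains a factor $(1-u)^{b_t - 1}$ with $b_t > 0$ whenever $t \geq 2$), and verifying the claimed $O(1/t)$ and $O(1/t^2)$ asymptotics for the mean and variance. Once these are in hand, the probabilistic step is a routine maximal-inequality argument.
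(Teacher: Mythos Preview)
Your proposal is correct and follows essentially the same route as the paper: decompose $\log S_v - x\log(v/T)$ into a centered random sum plus a deterministic error, bound the latter by $O_{m,\delta}(1)$ via the Beta mean asymptotics, and control the former uniformly in $v$ by Kolmogorov's maximal inequality using $\sum_t \mathrm{Var}\log(1-\psi_t) = O_{m,\delta}(1)$. The only cosmetic difference is that the paper bounds the variance via the elementary inequality $(\log(1-a))^2 \leq a^2/(1-a)^2$ and a Beta-function ratio rather than the trigamma identity, and it splits the deterministic piece through the intermediate quantity $-E\psi_t$; neither change affects the argument.
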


\begin{proof}
Postponed to \cref{sec:proofs_preliminary_facts}.
\end{proof}

\subsection{Homology of Finite Preferential Attachment Clique Complexes}
\label{sec:prelim_finite_pref_att}

We set up for the results from \citep{siu23_PrefAtt_betti} as follows.

\begin{itemize}
\item Let $X$ be a clique complex with vertices $1, ..., T$.
\item Let $L^{(t)}$ be the link of $t$ in $X^{(t)}$. We recall the definition of link in \cref{def:link}.
\item Let $X^{(t)}$ be the subcomplex of $X$ consisting of all simplices of $X$ whose vertices are in $\{1, ..., t\}$.
\item For a subcomplex $S$ of $X$, a nonnegative integer $q$, nodes $s, t$ of $X$ such that $s < t$ and all nodes in $S$ strictly precede $s$, let
\begin{itemize}
\item $\mathcal{S}(S, q, s, t)$ be the event where $S$ is isomorphic to the $(q-1)$-dimensional octahedral sphere $S^{q-1}$ (with the simplicial complex structure of the $\ell_1$-unit sphere in $\mathbb{R}^q$, see the second bullet point of \cref{rmk:simplicial_complex} for a formal definition of $S^{q-1}$) and all nodes in $S$ are connected to both $s$ and $t$, and
\item $b^{(t)}_{IK}(S, s) = \mathbf{1}[(\mathcal{S}(S, q, s, t)] \mathbf{1}[\beta_q(L^{(t)}, S) > 0]$,
\end{itemize}
where $\mathbf{1}[\Lambda]$ denotes the indicator function of the event $\Lambda$. The quantity $\beta_q(L^{(t)}, S)$ is the relative Betti number. Its precise definition, 
which can be found in Section 9 of \citep{munkres84algtopo},
needs not concern us here, as the results below give us all information we need about this quantity.
\end{itemize}

We need the following results.
\begin{proposition}[Proposition 14 of \citep{siu23_PrefAtt_betti}]\label{prop:decomposition}
Let $X$ be a clique complex with vertices labeled by positive integers. Let $q \geq 2$, $S$ be a subcomplex of $X$, and $s$ be a node that is strictly preceded by all nodes in $S$. Then
$$\sum_{s < t \leq T} (\mathbf{1}[\mathcal{S}(S, q, s, t)] - b^{(t)}_{IK}(S, s)) - \sum_{t \leq T} \beta_q(L^{(t)}) \leq \beta_q(X) \leq \sum_{t \leq T} \beta_{q-1}(L^{(t)}).$$
\end{proposition}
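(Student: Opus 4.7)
My plan is to prove both bounds by a standard Mayer--Vietoris bookkeeping along the node-filtration $X^{(1)} \subset X^{(2)} \subset \cdots \subset X^{(T)} = X$, and then exhibit the octahedral spheres as explicit nonzero classes in the key kernel. The filtration step is $X^{(t)} = X^{(t-1)} \cup \mathrm{st}(t)$, where the closed star $\mathrm{st}(t) = \{t\} * L^{(t)}$ is contractible and $X^{(t-1)} \cap \mathrm{st}(t) = L^{(t)}$. The Mayer--Vietoris sequence in dimension $q \geq 2$ therefore collapses to
\begin{equation*}
H_q(L^{(t)}) \xrightarrow{\phi^{(t)}} H_q(X^{(t-1)}) \to H_q(X^{(t)}) \to H_{q-1}(L^{(t)}) \xrightarrow{\psi^{(t)}} H_{q-1}(X^{(t-1)}).
\end{equation*}
Taking ranks over $\mathbb{Q}$ (which equal the Betti numbers) gives the one-step identity
\begin{equation*}
\beta_q(X^{(t)}) = \beta_q(X^{(t-1)}) - \mathrm{rk}\, \phi^{(t)} + \dim \ker \psi^{(t)}.
\end{equation*}
Telescoping from $\beta_q(X^{(1)}) = 0$, I obtain $\beta_q(X) = \sum_t (\dim \ker \psi^{(t)} - \mathrm{rk}\, \phi^{(t)})$.

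From this identity the upper bound is immediate: $\mathrm{rk}\, \phi^{(t)} \geq 0$ and $\dim \ker \psi^{(t)} \leq \beta_{q-1}(L^{(t)})$, so $\beta_q(X) \leq \sum_t \beta_{q-1}(L^{(t)})$. The lower bound is obtained dually: $\mathrm{rk}\, \phi^{(t)} \leq \beta_q(L^{(t)})$, so $\beta_q(X) \geq \sum_t \dim \ker \psi^{(t)} - \sum_t \beta_q(L^{(t)})$. It then suffices to prove, for each $t$ with $s < t \leq T$, the pointwise inequality $\dim \ker \psi^{(t)} \geq \mathbf{1}[\mathcal{S}(S,q,s,t)] - b^{(t)}_{IK}(S,s)$, after which summing over $t$ yields the advertised bound.

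To establish this pointwise inequality, the only nontrivial case is when the right-hand side equals $1$, i.e.\ when $\mathcal{S}(S,q,s,t)$ holds and $\beta_q(L^{(t)}, S) = 0$. I will show the fundamental class $[S] \in H_{q-1}(S) \cong \mathbb{Z}$ maps to a nonzero element of $\ker \psi^{(t)}$. First, $S \subseteq L^{(t)}$ because every node of $S$ is joined to $t$ under $\mathcal{S}$; similarly $S \subseteq L^{(s)} \subseteq X^{(t-1)}$ because every node of $S$ precedes $s$ and is joined to $s$. Hence the cone $\{s\}*S$ is a subcomplex of $X^{(t-1)}$, showing $\psi^{(t)}[S] = 0$. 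Second, to see $[S] \neq 0$ in $H_{q-1}(L^{(t)})$, I invoke the long exact sequence of the pair $(L^{(t)}, S)$ with rational coefficients,
\begin{equation*}
H_q(L^{(t)}, S; \mathbb{Q}) \xrightarrow{\partial} H_{q-1}(S; \mathbb{Q}) \to H_{q-1}(L^{(t)}; \mathbb{Q}).
\end{equation*}
The hypothesis $\beta_q(L^{(t)}, S) = 0$ forces $H_q(L^{(t)}, S; \mathbb{Q}) = 0$, so $\partial = 0$ and the inclusion-induced map $\mathbb{Q} \cdot [S] \hookrightarrow H_{q-1}(L^{(t)}; \mathbb{Q})$ is injective. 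Thus $[S]$ is a genuine nonzero class in $\ker \psi^{(t)}$, yielding $\dim \ker \psi^{(t)} \geq 1$.

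The main obstacle, and the reason the auxiliary indicator $b^{(t)}_{IK}$ is present, is the second substep above: if one dropped the hypothesis $\beta_q(L^{(t)}, S) = 0$, then $[S]$ could already be a boundary inside the link itself (via some chain unrelated to the cone over $s$), and would fail to contribute a new class in $\ker \psi^{(t)}$. Correctly encoding this obstruction is what forces the term $-b^{(t)}_{IK}$ in the lower bound; once this is isolated, the rest is a routine exact-sequence computation. I would work throughout with rational (or field) coefficients so that ``rank'' and ``dimension'' coincide and the Mayer--Vietoris rank identity above is exact.
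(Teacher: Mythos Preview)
Your argument is correct and is essentially the same as the intended proof. The paper does not prove this proposition here (it is quoted from \citep{siu23_PrefAtt_betti}), but the approach there and in the paper's own commented-out sketch is exactly yours: telescope $\beta_q$ along the node-filtration via Mayer--Vietoris for $X^{(t)} = X^{(t-1)} \cup \mathrm{St}(t)$ to get $\beta_q(X) = \sum_t(\dim\ker\psi^{(t)} - \mathrm{rk}\,\phi^{(t)})$, then bound $\dim\ker\psi^{(t)}$ from below by exhibiting $[S]$ as a nonzero element of $\ker\psi^{(t)}$ using the long exact sequence of the pair $(L^{(t)},S)$ together with the cone $\{s\}*S \subseteq X^{(t-1)}$. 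Your use of $s$ itself as the cone point (guaranteed by the event $\mathcal{S}(S,q,s,t)$) and your restriction to rational coefficients so that $\beta_q(L^{(t)},S)=0$ forces $H_q(L^{(t)},S;\mathbb{Q})=0$ are exactly the right moves.
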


\begin{proposition}[Lemmas 19 and 20 of \citep{siu23_PrefAtt_betti}]\label{thm:mean_betti}
Let $q \geq 2$. Suppose $m \geq 2q$. Let $X = X(T, \delta, m)$. Let $S$ be a (possibly random) subcomplex $S$ of $X(T, \delta, m)$, and $s$ be a (possibly random) node in $X(T, \delta, m)$ that is (almost surely) strictly preceded by all nodes in $S$. Then
\begin{align*}
\sum_{t \leq T} E[\beta_{q-1}(L^{(t)})] &\leq C_{\delta, m, q} T^{1-2qx(\delta, m)}\\
E[\sum_{s < t \leq T} b_{IK}^{(t)}(S, s)] &\leq C_{\delta, m, q} T^{1 - (2q+1)x(\delta, m)}
\end{align*} 
whenever the exponents of $T$ are positive. For each inequality, if the exponent is zero or negative, the bound still holds when the expression in $T$ is replaced by $\log T$ or $1$ respectively. 
\end{proposition}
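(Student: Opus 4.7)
The plan is to reconstruct the two estimates (Lemmas 19 and 20 of \citep{siu23_PrefAtt_betti}) using only the tools already collected above: the Pólya urn representation (\cref{thm:polya_equivalent}), the conditional independence of edges given $\psi_1, \ldots, \psi_T$ (\cref{lem:connection_prob}), and the asymptotic $S_v \asymp (v/T)^x$ (\cref{prop:S_asymptotics}). Together these say that, conditional on the $\psi$'s, each edge from a later vertex $u$ to an earlier vertex $v$ is realized with probability of order $v^{x-1} u^{-x}$, so the expected count of any prescribed edge pattern reduces to a nested sum of such power-law factors over the vertices involved.

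For the first bound, the crucial topological input is the flag-complex fact that $\beta_{q-1}(L) > 0$ forces the clique complex $L$ to contain an induced subcomplex isomorphic to the $(q-1)$-dimensional octahedral sphere, which has $2q$ vertices. Combined with the deterministic bound $\beta_{q-1}(L^{(t)}) \leq \binom{m}{q}$ (since $L^{(t)}$ has at most $m$ vertices, namely the neighbors of $t$ in $X^{(t)}$), this yields
$$\beta_{q-1}(L^{(t)}) \leq C_m \cdot \#\bigl\{\text{induced } (q-1)\text{-octahedral spheres in } L^{(t)}\bigr\}.$$
Each such configuration consists of $t$ and $2q$ of its neighbors with a specific pattern of edges: $2q$ edges from $t$ (contributing $t^{-2qx}$ after conditioning on $\psi$), plus the internal octahedral edges among the $v_i$'s. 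The inner powers in the $v_i$'s are summable (to a constant, or a logarithm at the critical exponent $x = 1/(2q)$), so the outer sum $\sum_t t^{-2qx}$ controls the order, yielding $T^{1-2qx}$ with the corresponding $\log T$ and $O(1)$ corrections at and past the critical exponent.

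For the second bound, I would decompose $\{b_{IK}^{(t)}(S,s) = 1\}$ according to what forces $\beta_q(L^{(t)}, S) > 0$. The long exact sequence of the pair $(L^{(t)}, S)$, together with $H_q(S^{q-1}) = 0$ and $H_{q-1}(S^{q-1}) = \mathbb{Z}$, shows that this event is the union of (A) $\beta_q(L^{(t)}) > 0$ and (B) $[S] = 0$ in $H_{q-1}(L^{(t)})$. Since $\mathcal{S}(S, q, s, t)$ forces $S$ to be an induced octahedral sphere in $L^{(t)}$, any $q$-chain filling $S$ must use a $q$-simplex containing at least one vertex $w$ outside $S$; being a vertex of $L^{(t)}$, such $w$ must itself be connected to $t$. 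In case (A), the same flag-complex input used in the first bound produces an induced octahedral $q$-sphere in $L^{(t)}$, which contains at least $(2q+2) - 2q = 2$ vertices outside $S$, each connected to $t$. In every case, the event forces at least one additional edge from $t$ beyond the $2q$ edges to $S$, contributing a factor $t^{-x}$ on top of the $t^{-2qx}$ from $\mathcal{S}$. Summing $t^{-(2q+1)x}$ over $t \leq T$ then gives the claimed $T^{1-(2q+1)x}$ with the analogous corrections.

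The main obstacle will be the case analysis in the second bound: one must verify that every topologically distinct mode of producing $\beta_q(L^{(t)}, S) > 0$ indeed forces an extra edge incident to $t$, and then aggregate these modes via a union bound without losing the single crucial factor of $t^{-x}$. A secondary technical nuisance is carefully tracking the nested summations to recover the stated logarithmic and constant corrections at the critical exponents $x = 1/(2q)$ and $x = 1/(2q+1)$.
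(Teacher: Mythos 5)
The paper does not actually prove this proposition --- it imports it wholesale from Lemmas 19 and 20 of \citep{siu23_PrefAtt_betti} --- so you are attempting a reconstruction, and your reconstruction has two genuine gaps. First, your key topological reduction is false: $\beta_{q-1}(L) > 0$ for a clique complex $L$ does \emph{not} force $L$ to contain an \emph{induced} octahedral $(q-1)$-sphere. For $q = 2$, the $5$-cycle has $\beta_1 = 1$ but contains no induced $4$-cycle (and no $4$-cycle as a subgraph at all), so the bound $\mathbf{1}[\beta_{q-1}(L^{(t)})>0] \leq C_m \cdot \#\{\text{induced octahedral spheres}\}$ fails. The correct combinatorial obstruction (Kahle's lemma, which is what \citep{siu23_PrefAtt_betti} uses) is that $L^{(t)}$ must contain a subgraph on at least $2q$ vertices in which every vertex has degree at least $2q-2$; the count must then be taken over all such configurations of sizes $2q$ through $m$, not over octahedra.

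Second, and more seriously, your accounting for the extra factor $t^{-x}$ in the second bound does not work as written. You argue that $\beta_q(L^{(t)},S)>0$ forces one additional neighbor $w$ of $t$ outside $S$, "contributing a factor $t^{-x}$." But $w$ is unspecified, so you must sum over its possible values, and $\sum_{w<t} w^{x-1}t^{-x} = O(1)$ --- the sum over the unconstrained endpoint exactly cancels the $t^{-x}$ from the edge $t \to w$. (Indeed, since $t$ emits $m \geq 2q$ edges, the event that $t$ has \emph{some} neighbor outside $S$ has probability close to $1$, not $O(t^{-x})$.) The factor $t^{-x}$ survives only because the extra vertex is forced to be connected to many \emph{other} vertices of the configuration (degree at least $2q$ within it, in the "$q$-complicated" terminology of \citep{siu23_PrefAtt_betti}), which makes the sum over $w$ converge to a constant while preserving the $t^{-x}$ from the edge $t \to w$. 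Extracting that degree condition from $\beta_q(L^{(t)},S)>0$ is precisely the content of Proposition 13 of \citep{siu23_PrefAtt_betti}; your long-exact-sequence decomposition into cases (A) and (B) is a reasonable start, but the step from "there exists $w$ outside $S$ in a $q$-simplex of the bounding chain" to the needed connectivity of $w$ is where the real work lies, and as written your estimate would only yield $T^{1-2qx}$, not $T^{1-(2q+1)x}$.
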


\subsection{A Criterion for Homotopy-Connectedness}

We need the following criterion for homotopy-connectedness.

\begin{theorem}[Barmak; Theorem 4 in the appendix of \citep{Farber23_largeRandomSimpComp}]\label{thm:conic}
Let $q$ be a nonnegative integer and $K$ be a simplicial complex. If every subcomplex $L \leq K$ with at most $2(q+1)$ vertices is contained in the star $\text{St}_K(v)$ of a vertex $v \in K$, then $K$ is $q$-homotopy-connected.
\end{theorem}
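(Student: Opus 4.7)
The plan is to verify that every continuous pointed map $f : S^r \to K$, for $0 \le r \le q$, is null-homotopic. By the simplicial approximation theorem, after a homotopy I may assume $f = g : T \to K$ is a simplicial map, where $T$ is a PL triangulation of $S^r$.

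The central observation is that the minimal triangulation of $S^r$, namely $\partial \Delta^{r+1}$, has exactly $r + 2 \le q + 2 \le 2(q+1)$ vertices. Hence any simplicial map $g_0 : \partial \Delta^{r+1} \to K$ has image supported on at most $r+2$ vertices, which by hypothesis is contained in some star $\text{St}_K(v)$. Since the star is a cone with apex $v$, it is contractible, so $g_0$ is null-homotopic in $K$. This handles the ``minimal-triangulation'' case and already illustrates why the hypothesis is tailored to the threshold $2(q+1)$.

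To reduce the general simplicial map $g : T \to K$ to one on $\partial \Delta^{r+1}$, I would invoke Pachner's theorem: any two PL triangulations of $S^r$ are connected by a finite sequence of local bistellar moves, each modifying $T$ only inside a single $(r+1)$-simplex involving at most $r + 2 \le 2(q+1)$ vertices. Under $g$, the affected vertices of $T$ map into a subcomplex of $K$ on at most $2(q+1)$ vertices, which by hypothesis is contained in some star $\text{St}_K(v)$. Since the star is contractible, the two simplicial configurations before and after the flip are homotopic inside that star relative to their common boundary, yielding a local homotopy of $g$. Composing these local homotopies one flip at a time transforms $g$ into a simplicial map on $\partial \Delta^{r+1}$, which is null-homotopic by the previous paragraph.

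The hard part will be assembling the local homotopies produced by each bistellar flip into a single well-defined continuous homotopy $S^r \times [0,1] \to K$, and handling the moves that must add or remove vertices when $T$ and $\partial\Delta^{r+1}$ have different vertex counts. A plausible workaround is to interleave vertex-preserving bistellar flips with stellar subdivisions and welds, observing that in each case the local change still involves only a bounded number of vertices which, by the hypothesis, can be coned off within a single contractible star; equivalently, one could try the more direct strategy of extending $g$ over a triangulated $(r+1)$-disk one simplex at a time, using that each newly filled simplex has at most $r+2 \le 2(q+1)$ boundary-image vertices and hence sits in a contractible star, though this variant requires extra care to keep successive extensions mutually compatible across faces.
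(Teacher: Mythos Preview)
The paper does not prove this statement; it is quoted from Barmak's appendix as a black-box preliminary, so there is no in-paper proof to compare against.

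Your argument has a genuine gap. Every bound you actually invoke is $r+2\le q+2$, never the full $2(q+1)$; if the reduction were sound it would prove the stronger claim that $K$ is $q$-connected whenever every subcomplex on at most $q+2$ vertices lies in a star. That claim is false: take $K=\Sigma\,\partial\Delta^{q}$, the suspension of $\partial\Delta^{q}$. This triangulates $S^{q}$ on $q+3$ vertices, so it is not $q$-connected, yet every subcomplex on at most $q+2$ vertices misses some vertex and lies in the star of a suitably chosen remaining one (for $q=1$ this is the $4$-cycle: every $3$-vertex subpath lies in the star of its middle vertex, but the square is not simply connected).

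The specific failure is in the Pachner step. A vertex-removing move (e.g.\ the $(q{+}1,1)$-move at a pole of $\Sigma\,\partial\Delta^{q}$) produces a new top simplex in the domain whose image under the old vertex map need not be a simplex of $K$; in the example the equatorial $q$-face $\{v_0,\dots,v_q\}$ is absent from $K$. You may fill that cell by a \emph{continuous} map into the contractible star, but then $g':\partial\Delta^{r+1}\to K$ is no longer simplicial, and your terminating step ``the image of $g'$ is supported on at most $r+2$ vertices, hence lies in a star'' no longer applies: the image of a non-simplicial map is merely a subset of $|K|$, not a small subcomplex. The same obstruction defeats the ``fill one $(r{+}1)$-simplex at a time'' variant, since once a face has been filled through some star apex, the next boundary sphere is a continuous map whose image need not sit in any single star. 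The fact that your argument never touches the gap between $q+2$ and $2(q+1)$ is the red flag.
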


\section{Proof Synopsis and Two Intermediate Results}
\label{sec:proof_synopsis_intermediate}

In this section, we sketch our proof and establish two intermediate results (and their corollaries), from which the main results will follow without much complication.

Regarding \cref{thm:betti_ass_orderOfMagnitude} for $q > 1$, since the first moments of the Betti numbers have been estimated in \citep{siu23_PrefAtt_betti}, the upper bounds follow directly from Markov's inequality. 
For the lower bounds, in light of \cref{prop:decomposition}, it suffices to establish a concentration result for the number of common neighbors of a finite set of nodes (namely nodes in $S$). This is the content of \cref{prop:common_neighbors_concentration} below.
Of course, to satisfy the assumptions of \cref{prop:decomposition}, this finite set of nodes has to form a sphere, so we also need a result on the existence of spheres in preferential attachment complexes. This is the content of \cref{prop:induced_sphere_infinite_graph} below.

It turns out the aforementioned concentration result also allows us to apply \cref{thm:conic} to establish \cref{thm:homotopy_connected}.

As in \citep{siu23_PrefAtt_betti}, the proof of \cref{thm:betti_ass_orderOfMagnitude} for $q = 1$ is based on Morse theory.

Finally, given the preliminary facts, \cref{thm:infinite_betti} is a straight-forward corollary of the other results.

We now state the intermediate results we need.

\begin{proposition}\label{prop:common_neighbors_concentration}
Suppose $m \geq k$ and $x(\delta, m) \leq 1/k$.
Fix nodes $v_1 < ... < v_k \leq t_0$ in the infinite preferential attachment graph $G(\infty, \delta, m)$. Let $X_t$ be the indicator of the event where node $t$ is a common neighbor of $v_1, ..., v_k$. Then for every $\varepsilon > 0$ there exists a constant $c_{m, \delta, k, \varepsilon, t_0} > 0$ such that with probability at least $1 - \varepsilon$, for $T \geq T_0(m, \delta, k, \varepsilon, t_0)$,
$$\sum_{t_0 < t \leq T} X_t \geq \begin{cases}
c_{m, \delta, k, \varepsilon, t_0} T^{1-kx(\delta, m)}
& \text{ if } x(\delta, m) < 1/k\\
c_{m, \delta, k, \varepsilon, t_0} \log T & \text{ if } x(\delta, m) = 1/k.
\end{cases}$$
\end{proposition}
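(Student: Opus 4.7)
The plan is to combine the Polya urn representation \cref{thm:polya_equivalent} with a conditional first-moment computation and a Chernoff-type concentration bound. The key observation is that by \cref{lem:connection_prob} the conditional probability that node $t$ attaches to node $v$ via a single edge equals $\psi_v S_v / S_{t-1} = \psi_v \prod_{v < s \leq t-1}(1-\psi_s)$, which involves neither the terminal time of the Polya construction nor any $S$-value at that terminal time. Consequently $(\psi_s)_{s \geq 1}$ can be treated as a single $T$-independent driving sequence for the whole infinite process, sidestepping the fact that \cref{thm:polya_equivalent} only gives distributional equivalence for each fixed $T$.

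The first step is to upgrade \cref{prop:S_asymptotics} to a statement uniform in $T$. A direct computation gives $E[\psi_s] = x/s + O(1/s^{2})$ and variance $O(1/s^{2})$, so the martingale $\sum_{s \leq N}(\psi_s - E\psi_s)$ converges almost surely; combined with $\log(1-\psi_s) = -\psi_s + O(\psi_s^{2})$ and the almost-sure summability of $\psi_s^{2}$, this shows that $\log S_v^{(T)} - x\log(v/T)$ remains bounded uniformly in $1 \leq v \leq T$ and $T$ on an event of probability at least $1 - \varepsilon/4$. Intersecting with a further event of probability at least $1 - \varepsilon/4$ on which each $\psi_{v_i}$ exceeds a positive constant (a single event, since there are only $k$ of them and $v_i \leq t_0$ is fixed) produces an event $G$ of probability at least $1 - \varepsilon/2$ on which $S_{v_i}/S_{t-1} \asymp (v_i/t)^{x}$ uniformly in the relevant $t$ and $T$.

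Next, conditionally on $(\psi_s)_{s \geq 1}$ the indicators $X_t$ are independent Bernoullis, since edges from distinct source nodes are independent. For $t$ large enough that $\psi_{v_i} S_{v_i}/S_{t-1} \leq 1/(2m)$, the elementary inequality $1-(1-p)^{m} \geq mp/2$ for $p \leq 1/(2m)$ together with the uniform bounds on $G$ yields
\[
P(X_t = 1 \mid \psi) = \prod_{i=1}^{k}\Bigl[1 - \bigl(1 - \psi_{v_i} S_{v_i}/S_{t-1}\bigr)^{m}\Bigr] \geq c\, t^{-kx}
\]
on $G$, for a positive constant $c$ depending on $m,\delta,k,\varepsilon,t_0$. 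Summing over $t_0 < t \leq T$ gives $E[\sum X_t \mid \psi] \geq c'\,T^{1-kx}$ in the subcritical case $kx<1$ and $\geq c' \log T$ in the critical case $kx=1$, for $T$ past some threshold. Since $X_t$ are conditionally independent Bernoullis, a conditional Chernoff bound gives, on $G$, that the probability that $\sum X_t$ falls below half its conditional mean is at most $\exp(-c'' T^{1-kx})$ in the subcritical case and at most $T^{-c''}$ in the critical case.

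Finally, to promote this to a statement valid for all $T \geq T_0$ simultaneously, I would apply the Chernoff bound at dyadic times $T_k = 2^{k}$, union bound over $k \geq K_0$ conditional on $G$ (the resulting tail sums $\sum_{k\geq K_0}\exp(-c''\,2^{k(1-kx)})$ and $\sum_{k\geq K_0} 2^{-c''k}$ are both finite and can be made less than $\varepsilon/2$ by choosing $K_0$ large), and then use monotonicity of $T \mapsto \sum_{t_0 < t \leq T} X_t$ to interpolate: for $T \in [T_k, T_{k+1}]$, the partial sum at $T$ dominates the one at $T_k$, which on the good event is at least $c\, T_k^{1-kx} \geq c\, 2^{kx-1}\, T^{1-kx}$, and analogously in the critical case. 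The main obstacle is precisely this uniform-in-$T$ aspect: the Polya construction is nominally a family indexed by $T$, and naive application would produce a family of mutually incompatible good events, one per $T$. The $T$-independence of $S_v/S_{t-1}$ and the martingale upgrade of \cref{prop:S_asymptotics} outlined above are the key technical device that lets us work with a single good event $G$.
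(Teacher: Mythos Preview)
Your overall strategy---Polya urn representation, condition on the $\psi$'s, lower-bound the conditional success probability $p_t$, then Chernoff---matches the paper's. However, there is a genuine error in your formula for $p_t = P(X_t = 1 \mid \psi)$. You write
\[
P(X_t = 1 \mid \psi) \;=\; \prod_{i=1}^{k}\Bigl[1 - \bigl(1 - \psi_{v_i} S_{v_i}/S_{t-1}\bigr)^{m}\Bigr],
\]
but this equality is false: the events ``$t$ is connected to $v_i$'' for different $i$ are \emph{not} conditionally independent, because they are all determined by the \emph{same} $m$ edges emanating from $t$. These are occupancy events for a balls-into-bins experiment and are negatively associated, so the product on the right is an \emph{upper} bound for $P(X_t = 1 \mid \psi)$, not a lower bound. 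Concretely, for small $p_i := \psi_{v_i}S_{v_i}/S_{t-1}$ your expression is $\approx m^{k}\prod_i p_i$, whereas the true probability is $\approx k!\binom{m}{k}\prod_i p_i$, which is strictly smaller for $k \geq 2$ (and is zero when $m<k$, where your formula would still be positive). Thus your displayed lower bound $p_t \geq c\,t^{-kx}$ does not follow from your argument. The paper obtains the correct lower bound via a two-term Bonferroni inequality (\cref{lem:pt_lowerbound}): it works with the events $A(\alpha_1,\dots,\alpha_k)$ that the $\alpha_i$-th edge of $t$ lands on $v_i$, for distinct labels $\alpha_1,\dots,\alpha_k$, and bounds their union from below by $\sum P(A) - \sum P(A\cap A')$. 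Plugging this fix into your framework, the rest of your argument goes through.

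Two smaller remarks. First, the paper interprets and proves the statement for each fixed $T \geq T_0$ separately (the constant and $T_0$ are uniform in $T$, but the probability-$(1-\varepsilon)$ event is allowed to depend on $T$); your dyadic-plus-monotonicity argument producing a single event good for all $T$ simultaneously is correct and gives a stronger conclusion, but is not needed for the applications in the paper. Second, your observation that $S_v/S_{t-1} = \prod_{v < s \leq t-1}(1-\psi_s)$ is independent of the terminal time, together with your martingale upgrade of \cref{prop:S_asymptotics}, is exactly the right way to justify treating $(\psi_s)_{s\geq 1}$ as a single infinite driving sequence; the paper leaves this implicit by simply fixing $T$ at the outset.
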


We prove this proposition at the end of this section.

\begin{corollary}\label{cor:common_neighbors}
Suppose $m \geq k$. If $x(\delta, m) \leq 1/k$, then every collection of $k$ distinct nodes has infinitely many common neighbors in the infinite preferential attachment graph $G(\infty, \delta, m)$ almost surely.
\end{corollary}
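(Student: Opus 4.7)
The plan is to deduce the corollary from Proposition \ref{prop:common_neighbors_concentration} via two standard upgrades: first from ``probability at least $1-\varepsilon$'' to ``almost surely'' for a fixed $k$-tuple, and then from a fixed $k$-tuple to ``every collection of $k$ distinct nodes'' via a countable union. No new random‐graph estimates are required.

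First I would fix an arbitrary $k$-tuple of distinct nodes $v_1 < \dots < v_k$ in $G(\infty, \delta, m)$ and set $t_0 = v_k$. Writing $N(v_1, \dots, v_k)$ for the total number of common neighbors of $v_1, \dots, v_k$ that arrive strictly after $t_0$, I would observe that $N(v_1, \dots, v_k) = \lim_{T \to \infty} \sum_{t_0 < t \leq T} X_t$ in the monotone sense (so the sum is well defined in $\mathbb{Z}_{\geq 0} \cup \{\infty\}$). Given any $\varepsilon > 0$, Proposition \ref{prop:common_neighbors_concentration} furnishes a constant $c_{m, \delta, k, \varepsilon, t_0} > 0$ and a threshold $T_0$ such that, with probability at least $1 - \varepsilon$, the partial sum $\sum_{t_0 < t \leq T} X_t$ is bounded below by $c_{m, \delta, k, \varepsilon, t_0} T^{1-kx(\delta, m)}$ (or $c_{m, \delta, k, \varepsilon, t_0} \log T$ in the critical case $x(\delta, m) = 1/k$) for every $T \geq T_0$. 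Since both of these lower bounds diverge as $T \to \infty$, on the same event $N(v_1, \dots, v_k) = \infty$. Hence $P(N(v_1, \dots, v_k) = \infty) \geq 1 - \varepsilon$, and sending $\varepsilon \downarrow 0$ gives $P(N(v_1, \dots, v_k) = \infty) = 1$.

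To conclude, I would upgrade this to a statement about \emph{every} $k$-tuple by a countable‐intersection argument. The vertex set of $G(\infty, \delta, m)$ is $\mathbb{N}$, so the collection of unordered $k$-subsets of vertices is countable. Taking the intersection over all such subsets $\{v_1, \dots, v_k\}$ of the probability-one events exhibited above yields an event of probability one on which every $k$-tuple has infinitely many common neighbors. This completes the proof.

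The main (minor) obstacle is simply keeping the quantifiers straight: the constants in Proposition \ref{prop:common_neighbors_concentration} depend on $\varepsilon$ and on $t_0$, so one must fix the $k$-tuple before choosing $\varepsilon$, rather than attempting to take a uniform-in-$\varepsilon$ lower bound. Once this is done, the monotonicity of the partial sums and the divergence of the right-hand side of the bound do all of the work.
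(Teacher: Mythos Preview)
Your proof is correct and follows essentially the same approach as the paper's: fix a $k$-tuple, use Proposition~\ref{prop:common_neighbors_concentration} to show the number of common neighbors exceeds any bound with probability at least $1-\varepsilon$, and let $\varepsilon\downarrow 0$. Your version is in fact slightly more complete, since you make explicit the countable intersection over all $k$-tuples needed for the ``every collection'' quantifier, which the paper leaves implicit. One small caution: the proof of Proposition~\ref{prop:common_neighbors_concentration} establishes the bound for each fixed $T$ separately (the Polya urn coupling is set up for a given $T$), so strictly speaking you should argue with a single large $T$ rather than a single event valid for all $T\geq T_0$; but since the partial sums are monotone in $T$, this changes nothing in your conclusion.
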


\begin{proof}
The above proposition implies that, for every $\varepsilon > 0$, the probability that the $k$ nodes have at most $N$ common neighbors in $G(\infty, \delta, m)$ is bounded above by $\varepsilon$ by considering $G(T, \delta, m)$ with a sufficiently large $T$. Hence the said probability is in fact $0$. The result follows by taking union with $N$ ranging over all positive integers.
\end{proof}

\begin{remark}
Putting $k = 2$ shows that the diameter of the infinite preferential attachment graph is 2 almost surely. Geodesics between disconnected nodes go through nodes in the distant future. This behavior is markedly different from that of finite preferential attachment graphs, where short paths tend to pass through nodes in a small core of nodes \citep{dommers10_prefAtt_diameter}.
\end{remark}

\begin{proposition}\label{prop:induced_sphere_infinite_graph}
If $m \geq 2q$ and $x(\delta, m) \leq \frac{1}{2q}$, then
$G_\text{simple}(\infty, \delta, m)$ contains an $S^q$ as an induced subgraph almost surely, where $G_\text{simple}$ denotes the graph formed by replacing repeated edges by single edges.
\end{proposition}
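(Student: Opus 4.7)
The plan is to construct an induced octahedral sphere $S^q$ inductively, one antipodal pair at a time. Recall that $S^q$ has $2(q+1)$ vertices organized into $q+1$ antipodal pairs $(x_i, y_i)$, and its $1$-skeleton consists of all $\binom{2(q+1)}{2}$ pairs except the $q+1$ antipodal ones. I will establish by induction on $k \in \{1, \ldots, q+1\}$ that almost surely one can choose $2k$ nodes of $G_\text{simple}(\infty, \delta, m)$ whose induced subgraph is $S^{k-1}$; the desired conclusion is then the case $k = q+1$.

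For the base case $k=1$, I merely need a non-adjacent pair. Since each node in the preferential attachment graph adds only $m$ edges to earlier nodes, every node has at most $m$ out-neighbors in $G_\text{simple}$. Hence for any $T \geq m+1$, node $T+1$ fails to be adjacent to at least one of $\{1, \ldots, T\}$, yielding a non-adjacent pair $(x_1, y_1)$ deterministically.

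For the inductive step from $k-1$ to $k$, suppose I have assembled $V = \{x_1, y_1, \ldots, x_{k-1}, y_{k-1}\}$ whose induced subgraph is $S^{k-2}$. I need to find two common neighbors $x_k, y_k$ of $V$ that are non-adjacent to each other. Since $|V| = 2(k-1) \leq 2q$, the hypotheses $m \geq 2q \geq |V|$ and $x(\delta, m) \leq 1/(2q) \leq 1/|V|$ let me apply \cref{cor:common_neighbors} to $V$, which supplies infinitely many common neighbors (necessarily later than $V$) almost surely; enumerate them as $a_1 < a_2 < \cdots$. Invoking again the degree bound that each $a_j$ has at most $m$ out-neighbors in $G_\text{simple}$, the number of adjacent pairs $\{a_i, a_j\}$ with $i < j$ among the first $N$ common neighbors is at most $mN$, dwarfed by the $\binom{N}{2}$ total pairs when $N$ is large. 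Pick any non-adjacent pair among them and call it $(x_k, y_k)$. These vertices lie outside $V$, and by construction are adjacent to every vertex of $V$ but not to each other, so $V \cup \{x_k, y_k\}$ induces $S^{k-1}$, closing the induction.

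The main difficulty is absorbed into \cref{cor:common_neighbors} (and thus \cref{prop:common_neighbors_concentration}), which guarantees infinitely many common neighbors of any bounded collection of nodes; the remaining ingredients, namely the deterministic degree bound of $m$ per node together with an elementary double-counting argument, require no new ideas.
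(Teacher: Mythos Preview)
Your proof is correct and follows essentially the same inductive strategy as the paper: build $S^{k-1}$ from $S^{k-2}$ by invoking \cref{cor:common_neighbors} on the $2(k-1)$ existing vertices and then selecting two non-adjacent common neighbors as new poles. The paper's argument for the existence of a non-adjacent pair is slightly more direct---it simply observes that the $(m+1)^{\text{st}}$ common neighbor cannot be adjacent to all $m$ earlier ones---whereas your double-counting of edges versus pairs achieves the same end with a bit more work, but there is no substantive difference.
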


\begin{proof}
We prove the proposition by induction. The case for $q = 0$ is trivial, as $S^0$ is just two disconnected points. Inductively, assume $G_\text{simple}(\infty, \delta, m)$ contains an $S^{q-1}$ as an induced subgraph. By \cref{cor:common_neighbors}, the nodes of this $S^{q-1}$ have infinitely many common neighbors, and they cannot all be connected with each other (since the $(m+1)^{st}$ common neighbor cannot be connected with all previous ones). Then $G_\text{simple}(\infty, \delta, m)$ contains, as an induced subgraph, an $S^{k}$ that contains the $S^{q-1}$ above as the equator and two disconnected common neighbors of the nodes of the $S^{q-1}$ as the poles.
\end{proof}

\begin{corollary}\label{cor:induced_sphere_finite_graph}
If $m \geq 2q$ and $x(\delta, m) \leq \frac{1}{2q}$, then for every $\varepsilon > 0$, for $T \geq T_0'(m, \delta, q, \varepsilon)$, with probability at least $1-\varepsilon$,
$G_\text{simple}(T, \delta, m)$ contains an $S^q$ as an induced subgraph.
\end{corollary}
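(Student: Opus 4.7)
The plan is to deduce the finite-graph statement from the infinite-graph statement (\cref{prop:induced_sphere_infinite_graph}) by a standard continuity-of-probability argument, exploiting the monotone nature of the event ``contains an induced $S^q$.''

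First, I would observe that in the preferential attachment process, when node $T+1$ is added, new edges are drawn only between $T+1$ and existing nodes; no edges are inserted between nodes that were already present. Consequently, for any fixed finite vertex set $V \subseteq \{1,\dots,T\}$, the induced subgraph on $V$ is the same in $G_\text{simple}(T,\delta,m)$, in $G_\text{simple}(T',\delta,m)$ for all $T' \geq T$, and in $G_\text{simple}(\infty,\delta,m)$. In particular, letting $A_T$ denote the event that $G_\text{simple}(T,\delta,m)$ contains an induced $S^q$, the sequence $(A_T)_{T \geq 2}$ is monotone increasing in $T$.

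Next, I would identify $\bigcup_{T \geq 2} A_T$ with the event that $G_\text{simple}(\infty,\delta,m)$ contains an induced $S^q$: the ``only if'' direction follows from the preservation remark above, and the ``if'' direction follows because any induced $S^q$ in $G_\text{simple}(\infty,\delta,m)$ uses only finitely many nodes, hence lies entirely inside $G_\text{simple}(T,\delta,m)$ for $T$ sufficiently large (and the induced subgraph is unchanged by the preservation remark). By \cref{prop:induced_sphere_infinite_graph}, this union event has probability $1$.

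Finally, continuity of probability from below gives $\lim_{T \to \infty} P(A_T) = P\bigl(\bigcup_T A_T\bigr) = 1$, so for every $\varepsilon > 0$ there exists $T_0' = T_0'(m,\delta,q,\varepsilon)$ with $P(A_{T_0'}) \geq 1-\varepsilon$, and by monotonicity $P(A_T) \geq 1-\varepsilon$ for all $T \geq T_0'$. There is no real obstacle here; the only subtle point is verifying that the ``induced'' property is preserved under the preferential attachment dynamics, which follows immediately because the dynamics only adds edges incident to the newly arrived vertex.
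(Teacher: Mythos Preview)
Your proposal is correct and is essentially the same argument as the paper's own proof, which also defines the events $\Lambda_T$, uses continuity of probability along the increasing sequence, and identifies the union with the almost-sure event from \cref{prop:induced_sphere_infinite_graph}. Your write-up is in fact slightly more explicit than the paper's about why the induced-subgraph property is preserved under the dynamics and why the events are monotone.
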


\begin{proof}
For $T \in \{\infty\} \cup \{1, 2, 3, ...\}$, let $\Lambda_T$ be the event that $G_\text{simple}(T, \delta, m)$ contains an $S^q$ as an induced subgraph. Then
$$
\lim_{T \to \infty} P(\Lambda_T) = P(\cup_{T < \infty} \Lambda_T) = P(\Lambda_\infty) = 1.$$
\end{proof}

Before proving \cref{prop:common_neighbors_concentration}, we first establish a lemma.

\begin{lemma}\label{lem:pt_lowerbound}
Consider the Polya urn model (Cf. \cref{def:polya_prefAtt,thm:polya_equivalent}). Suppose $m \geq k$.
Let $p_t$ be the conditional probability that $X_t = 1$ given $\psi_1, ..., \psi_T$. Then for $t \geq v_k$,
\begin{align*}
p_t & \geq 
k! {m \choose k} \prod_{1 \leq i \leq k} \psi_{v_i} \frac{S_{v_i}}{S_{t-1}} - \left(k! {m \choose k}\right)^2 \left(\max_{1 \leq i \leq k} \psi_{v_i} \frac{S_{v_i}}{S_{t-1}}\right) \left(\prod_{1 \leq i \leq k} \psi_{v_i} \frac{S_{v_i}}{S_{t-1}} \right) \notag
\\&\geq
k! {m \choose k} \prod_{1 \leq i \leq k} \psi_{v_i} \frac{S_{v_i}}{S_{t-1}} \left(1 - k! {m \choose k} \frac{\max_{1 \leq i \leq k} S_{v_i}}{S_{t-1}} \right). 
\end{align*}
\end{lemma}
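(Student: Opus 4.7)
The plan is to apply a second-order Bonferroni inequality to a decomposition of $\{X_t = 1\}$ into elementary events indexed by edge-to-target assignments. By Lemma~\ref{lem:connection_prob} and the conditional independence of the $m$ edges of node $t$ given $\psi_1, \ldots, \psi_T$, these edges are conditionally i.i.d., each landing on $v_i$ with probability $q_i := \psi_{v_i} S_{v_i}/S_{t-1}$. For each ordered $k$-tuple $\mathbf{i} = (i_1, \ldots, i_k)$ of distinct indices in $\{1, \ldots, m\}$, I would define $A_{\mathbf{i}}$ to be the event that the $i_j$-th edge of $t$ lands on $v_j$ for every $j$. The key initial observation is that $X_t = 1$ iff each $v_j$ is hit at least once, and that taking $i_j$ to be the smallest-indexed edge hitting $v_j$ produces a valid tuple of distinct indices; hence $\{X_t = 1\} = \bigcup_{\mathbf{i}} A_{\mathbf{i}}$, a union of $k! \binom{m}{k}$ events, each of conditional probability $\prod_j q_j$ by conditional independence.

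Applying the standard Bonferroni bound $P(\bigcup A_\alpha \mid \psi) \geq \sum_\alpha P(A_\alpha \mid \psi) - \sum_{\alpha \neq \beta} P(A_\alpha \cap A_\beta \mid \psi)$ (with a factor of $2$ slack absorbed into the sum over ordered pairs) immediately produces the leading term $k! \binom{m}{k} \prod_j q_j$ of the desired lower bound. The substance of the proof is then the control of pairwise intersections. The critical structural observation is that if some edge index $i$ appears at position $j$ in $\mathbf{i}$ and at a different position $j' \neq j$ in $\mathbf{i}'$, the intersection $A_{\mathbf{i}} \cap A_{\mathbf{i}'}$ is empty, since one edge cannot land on two distinct targets. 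Otherwise, letting $J \subseteq \{1, \ldots, k\}$ be the set of positions at which $\mathbf{i}$ and $\mathbf{i}'$ agree, the two tuples jointly specify a consistent assignment of $2k - |J|$ edges, and conditional independence gives $P(A_{\mathbf{i}} \cap A_{\mathbf{i}'} \mid \psi) = (\prod_j q_j) \prod_{j \notin J} q_j$. Since $\mathbf{i} \neq \mathbf{i}'$ forces $J$ to be a proper subset of $\{1, \ldots, k\}$, the trivial bound $q_j \leq 1$ yields $P(A_{\mathbf{i}} \cap A_{\mathbf{i}'} \mid \psi) \leq (\prod_j q_j)(\max_j q_j)$.

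Bounding the number of ordered pairs of distinct tuples by $(k! \binom{m}{k})^2$ and multiplying by the per-pair bound delivers the subtracted term, completing the first inequality. The second inequality is then immediate from $\psi_{v_i} \leq 1$, which lets me replace $\max_i \psi_{v_i} S_{v_i}/S_{t-1}$ by $\max_i S_{v_i}/S_{t-1}$ and factor the remaining $\prod_i \psi_{v_i} S_{v_i}/S_{t-1}$ out cleanly. The main obstacle I anticipate is the compatibility analysis of distinct tuples: one must cleanly separate the cross-collision case (which kills the intersection) from the agree-on-$J$ case, and verify that in the latter the intersection probability gains exactly one extra factor of $\max_j q_j$ beyond $\prod_j q_j$. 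Everything else, including the inequality manipulation and the counting of pairs, is routine bookkeeping.
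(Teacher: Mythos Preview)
Your proposal is correct and matches the paper's argument essentially line for line: the paper also writes $\{X_t=1\}$ as the union of the events $A(\alpha_1,\dots,\alpha_k)$ over the $k!\binom{m}{k}$ ordered tuples of distinct edge labels, applies the two-term Bonferroni inequality, bounds each pairwise intersection by $(\max_i q_i)\prod_i q_i$ using $\sum_i \mathbf{1}[\alpha_i\neq\alpha_i']\geq 1$, and finishes with $\psi_{v_i}\leq 1$. Your explicit separation of the cross-collision case (where the intersection is empty) is slightly more careful than the paper's, which simply writes an ``equality'' for the intersection probability that is in fact only an upper bound in that case, but the outcome is identical.
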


\begin{proof}
The second inequality follows directly from the first one, because $0 \leq \psi_v \leq 1$. For the first inquality, we use the two-term Bonferroni inequality (Exercise 1.6.10 of \citep{durrett19_probability}):
$$P(A_1 \cup ... \cup A_n) \geq \sum_{1 \leq i \leq n} P(A_i) - \sum_{1 \leq i < j \leq n}P(A_i \cap A_j).$$

For distinct integers $\alpha_1, ..., \alpha_k \in [1, m]$, let $A(\alpha_1, ..., \alpha_k)$ be the event where the $t$ is connected to $v_i$ via $t$'s $\alpha_i^{th}$ edge, for $i = 1, ..., k$.

Then $\{X_t = 1\}$ is the union of the $A(\alpha_1, ..., \alpha_k)$'s. There are $k! {m \choose k}$ distinct choices of $(\alpha_1, ..., \alpha_k)$.

By \cref{lem:connection_prob},
\begin{align*}
P(A(\alpha_1, ..., \alpha_k)) &= \prod_{1 \leq i \leq k} \psi_{v_i} \frac{S_{v_i}}{S_{t-1}}.
\\
\intertext{If $(\alpha_1, ..., \alpha_k) \neq (\alpha'_1, ..., \alpha'_k)$, then}
P(A(\alpha_1, ..., \alpha_k) \cap A(\alpha'_1, ..., \alpha'_k))
&= \prod_{1 \leq i \leq k} \left (\psi_{v_i} \frac{S_{v_i}}{S_{t-1}} \right)^{1 + \mathbf{1}[\alpha_i \neq \alpha'_i]}
\\&\leq \left[\max_{1 \leq i \leq k} \left (\psi_{v_i} \frac{S_{v_i}}{S_{t-1}} \right)\right]^{\sum_{1 \leq i \leq k} \mathbf{1}[\alpha_i \neq \alpha'_i]} \prod_{1 \leq i \leq k} \left (\psi_{v_i} \frac{S_{v_i}}{S_{t-1}} \right).
\\&\leq \max_{1 \leq i \leq k} \left (\psi_{v_i} \frac{S_{v_i}}{S_{t-1}} \right) \prod_{1 \leq i \leq k} \left (\psi_{v_i} \frac{S_{v_i}}{S_{t-1}} \right),
\end{align*}
where the last inequality holds because $(\alpha_1, ..., \alpha_k) \neq (\alpha'_1, ..., \alpha'_k)$ implies $\sum_{1 \leq i \leq k} \mathbf{1}[\alpha_i \neq \alpha'_i] \geq 1$, and each $\psi_{v_i} \frac{S_{v_i}}{S_{t-1}}$, being a probability, lies in $[0, 1]$. The lemma then follows by applying the two-term Bonferroni inequality.
\end{proof}

\begin{proof}[Proof of \cref{prop:common_neighbors_concentration}]

Fix $T$. We consider the Polya urn model (Cf. \cref{def:polya_prefAtt,thm:polya_equivalent}) and condition on $\psi_1, ..., \psi_T$.
Then $X_t$'s are conditionally independent Bernoulli random variables.

By \cref{prop:S_asymptotics}, there exist positive constants $c_{m,\delta,\varepsilon},C_{m,\delta,\varepsilon}$ such that the event $\Lambda_1$ defined by
$$c_{m,\delta,\varepsilon} \leq \frac{S_s}{(s/T)^x} \leq C_{m,\delta,\varepsilon} \quad \forall 1 \leq s \leq T$$
has probability at least $1 - \varepsilon/3$.

Let $\Lambda_2$ be the event where $\min_{1 \leq i \leq k} \psi_{v_i} \geq \eta$, where $\eta > 0$ is chosen such that $P(\Lambda_2^C)$ is at most $\varepsilon/3$. This is possible by the continuity of Beta distributions.

Recall that \cref{lem:pt_lowerbound} gives the lower bound of $p_t$, the conditional probability that $X_t = 1$ given $\psi_1, ..., \psi_T$. On $\Lambda = \Lambda_1 \cap \Lambda_2$, which is $(\psi_1, ..., \psi_T)$-measurable, if $t \geq T_1(m, \delta, \varepsilon, t_0)$ for some $T_1(m, \delta, \varepsilon, t_0)$, 
the parenthesized factor (not the binomial coefficients) in the last line of \cref{lem:pt_lowerbound} is at least $1/2$ and hence
$$p_t \geq C_{m, \delta, k, \varepsilon, t_0} \eta^k \prod_{1 \leq i \leq k} \frac{v_i^x}{t^x} \geq C_{m, \delta, k, \varepsilon, t_0} t^{-kx},$$
because $\eta$ depends on $\varepsilon$ and $v_i \geq 1$ for every $1 \leq i \leq n$.

The Chernoff bound then implies that, on $\Lambda$, for every $A, B > 0$,
\begin{align}
P[\sum X_t < B | \psi_1, ..., \psi_T] 
&
\leq \exp [AB - (1 - e^{-A}) \sum p_t] \notag
\\&\leq \exp [AB - (1 - e^{-A}) C_{m, \delta, k, \varepsilon, t_0} \sum t^{-kx}],\label{eqn:choose_C}
\end{align}
where all sums here range over $[T_1(m, \delta, \varepsilon, t_0), T] \cap \mathbb{Z}$.
Picking $A = 1$ and $B$ to be $1/2 \sum t^{-kx}$ times the \emph{specific} $C_{m, \delta, k, \varepsilon, t_0}$ in \cref{eqn:choose_C} gives
$$
P[\sum X_t < B | \psi_1, ..., \psi_T] 
\leq \exp(-C_{m, \delta, k, \varepsilon, t_0} \sum t^{-kx}) \qquad \text{on $\Lambda$}
$$
for some generic positive constant $C_{m, \delta, k, \varepsilon, t_0}$. The divergence of $\sum t^{-kx}$ implies that the right-hand side is less than $\varepsilon/3$ for large enough $T$.

Therefore, by the $(\psi_1, ..., \psi_T)$-measurability of $\Lambda$,
\begin{align*}
&P[\sum X_t < B]
\\&\leq P[\Lambda \cap \{\sum X_t < B\}] + P(\Lambda^C)
\\&= E[\mathbf{1}_\Lambda P(\sum X_t < B | \psi_1, ..., \psi_T)] + P(\Lambda^C) 
\\&\leq \varepsilon/3 + 2\varepsilon/3 = \varepsilon.
\end{align*}
The proposition then follows by noting that $B$ has the desired order of magnitude.
\end{proof}

\section{Proofs of Main Results}
\label{sec:proofs}

\begin{proof}[Proof of \cref{thm:homotopy_connected}]
Apply \cref{cor:common_neighbors} with \cref{thm:conic}. 
\end{proof}

\begin{proof}[Proof of \cref{thm:betti_ass_orderOfMagnitude} for $q > 1$]

We only prove the case for $x < \frac{1}{2q}$. The case for $x = \frac{1}{2q}$ is analogous.

By \cref{thm:mean_betti} and \cref{prop:decomposition}, Markov's inequality 
implies
$$\beta_q(X(T, \delta, m)) \leq \omega(T) T^{1-2qx}$$
asymptotically almost surely.

Similarly, in the notations of \cref{thm:mean_betti} and \cref{prop:decomposition}, it holds asymptotically almost surely that
\begin{align*}
\sum_{t \leq T} \beta_q(L^{(t)}) &\leq T^{1-2qx}\\
\sum_{s < t \leq T} b^{(t)}_{IK}(S, s) &\leq T^{1-2qx}
\end{align*}
regardless of the choice of $S$ and $s$ (as longs as the $b^{(t)}_{IK}(S, s)$'s are well-defined).


Let $S$ be the random subcomplex in $X(T, \delta, m)$ that is the first subcomplex isomorphic to $S^{q-1}$ as an induced subcomplex. If such a subcomplex does not exist, let $S$ be the induced subcomplex on the first $2q$ nodes. (This choice will be immaterial.)

Let $s$ be the random first node after those in $S$ that is connected to all nodes in $S$. If such an $s$ does not exist, let $s$ be the first node after those in $S$.

It suffices to bound 
$$P(S \text{ is not isomorphic to $S^{q-1}$ or $S$ has fewer than $\frac{1}{\omega(T)} T^{1-2qx}$ common neighbors}).$$

Fix $\varepsilon > 0$.
By \cref{cor:induced_sphere_finite_graph}, for $T$ large enough, $P(S \text{ is not isomorphic to $S^{q-1}$}) < \varepsilon$.

By \cref{prop:common_neighbors_concentration}, conditioning on $S$ shows that for $T$ large enough, $$P(\text{$S$ has fewer than $\frac{1}{\omega(T)} T^{1-2qx}$ neighbors}) < \varepsilon.$$

The result then follows.

\end{proof}

\begin{proof}[Proof of \cref{thm:betti_ass_orderOfMagnitude} for $q = 1$]
We follow the proof of Proposition 4 of \citep{siu23_PrefAtt_betti}. Let $V$, $E$, and $F$ be the (multi-)sets of vertices, edges and triangles of $G(T, \delta, m)$. Let $E_\text{simple}$ and $F_\text{simple}$ be the set of edges and triangles of $X(T, \delta, m)$.
The strong Morse inequality (Theorem 1.8 of \citep{forman02_discreteMorse_guide}) implies that
$$- |F_\text{simple}|\leq \beta_1(X(T, \delta, m)) - (|E_\text{simple}| - |V|) \leq \beta_0(X(T, \delta, m)),$$
where $|\cdot|$ denotes cardinality. Let $B$ be the number of ``biangles'' in $G(T, \delta, m)$ (two-node subgraphs with two (repeated) edges between them).
Since $|F_\text{simple}| \leq |F|$ and $|E| - B \leq |E_\text{simple}| \leq |E|$,
$$- (|F| + B) \leq \beta_1(X(T, \delta, m)) - (|E| - |V|) \leq 1,$$

The rest of the proof follows from Markov's inequality 
and Theorem 1 of \citep{garavaglia19_subgraphs_preferentialAttachment}.

\end{proof}

Before we prove \cref{thm:infinite_betti}, we need a lemma to compute the homology groups of the union of a nested sequence of spaces.

\begin{lemma}\label{lem:homology_infinite_complex}
Let $q \geq 0$. Let $X^{(1)}, X^{(2)}, ...$ be a nested sequence of simplicial complexes. Suppose the inclusion maps induce injective homomorphisms on $H_q$ eventually (induced homomorphism defined in \cref{def:induced_homomorphism}). Then
$$H_q(\cup_{t \geq 1} X^{(t)}) \cong \cup_{t \text{ large enough}} H_q(X^{(t)}).$$
\end{lemma}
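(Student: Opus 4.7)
The plan is to set $X = \bigcup_{t \geq 1} X^{(t)}$ and to show that the natural maps $\phi_t : H_q(X^{(t)}) \to H_q(X)$ induced by the inclusions $X^{(t)} \hookrightarrow X$ assemble, under the eventual injectivity hypothesis, into an isomorphism identifying $H_q(X)$ with the ascending union $\bigcup_{t \text{ large}} \phi_t(H_q(X^{(t)}))$. Morally, I am proving that simplicial homology commutes with the filtered colimit $\mathrm{colim}_t X^{(t)}$ (the statement alluded to in Chapter 14.6 of \citep{may99_algtopo}) and then using the eventual injectivity to rewrite the colimit as an honest nested union.

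First, I would exploit the finite support of simplicial chains. Every simplex of $X$ belongs to some $X^{(t)}$, so $C_q(X) = \bigcup_t C_q(X^{(t)})$; because the boundary operator commutes with inclusions, the same statement passes to the subgroups of cycles and boundaries, giving $Z_q(X) = \bigcup_t Z_q(X^{(t)})$ and $B_q(X) = \bigcup_t B_q(X^{(t)})$. In particular, surjectivity of the collection $\{\phi_t\}$ onto $H_q(X)$ is immediate: any homology class $[\gamma] \in H_q(X)$ is represented by a cycle $\gamma$ that, being a finite sum of simplices, lies in some $Z_q(X^{(t)})$ and is therefore hit by $\phi_t$.

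Next I would address injectivity for $t$ beyond the threshold $t_0$ given by the hypothesis. Suppose $[\gamma] \in H_q(X^{(t)})$ with $t \geq t_0$ satisfies $\phi_t([\gamma]) = 0$. Then $\gamma = \partial \delta$ for some $\delta \in C_{q+1}(X)$; finite support of $\delta$ places it in some $C_{q+1}(X^{(s)})$ with $s \geq t$, so $[\gamma]$ is already trivial in $H_q(X^{(s)})$. Since the bonding map $H_q(X^{(t)}) \to H_q(X^{(s)})$ is a composition of maps that are injective by hypothesis, it is itself injective, forcing $[\gamma] = 0$ in $H_q(X^{(t)})$. Thus each $\phi_t$ is injective for $t \geq t_0$, and combining with the surjectivity from the previous step yields
\[
H_q(X) \;=\; \bigcup_{t \geq t_0} \phi_t(H_q(X^{(t)})) \;\cong\; \bigcup_{t \geq t_0} H_q(X^{(t)}),
\]
where the last union makes sense as a directed union of subgroups because the $\phi_t$'s are compatible injections.

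The hardest step conceptually is disentangling the two meanings of ``union'' in the statement (a colimit of groups versus an ascending union of subgroups of a fixed ambient group), and then verifying that the eventual injectivity propagates all the way up to $H_q(X)$ rather than merely along the sequence. Everything else is routine bookkeeping with finitely supported chains, and no additional topological input is needed beyond the definition of simplicial homology.
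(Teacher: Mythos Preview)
Your argument is correct, but it proceeds differently from the paper's proof. The paper invokes the general fact from \citep{may99_algtopo} that simplicial homology commutes with directed colimits, so that $H_q(X)\cong\operatorname{colim}_t H_q(X^{(t)})$, and then spends the bulk of the proof verifying, via the universal property, that the nested union $\bigcup_{t\geq t_0} H_q(X^{(t)})$ (with the inclusion maps $j_t$) really is the categorical colimit of the directed system. You instead bypass the abstract colimit language entirely: you prove the colimit statement and the identification with the union simultaneously, by working directly with finitely supported chains to show that each $\phi_t$ is injective for $t\geq t_0$ and that the $\phi_t$'s are jointly surjective. Your route is more elementary and self-contained (no need to know what a colimit is or to check a universal property), while the paper's route is more modular, cleanly separating the general ``homology of a union'' principle from the specific use of eventual injectivity.
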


\begin{proof}
Deferred to the end of \cref{sec:proofs_preliminary_facts}. 
\end{proof}

\begin{proof}[Proof of \cref{thm:infinite_betti}]
In order to invoke \cref{lem:homology_infinite_complex} to reduce the computation to one for finite complexes, we show that the inclusion map induces an injective homomorphism between
$$H_q(X(t-1, \delta, m)) \to H_q(X(t, \delta, m))$$
eventually. Indeed, since $x(\delta, m) > \frac{1}{2q+2}$, for the relevant sum in \cref{thm:mean_betti} to converge to a finite number almost surely, $\beta_q(L^{(t)})$ must be eventually $0$ almost surely, and hence, with field coefficients $H_q(L^{(t)})$ must vanish (Cf. the second bullet point after \cref{def:homology_coefficients}). The segment of the Mayer-Vietoris sequence (\cref{thm:mayer_vietoris}) of $X^{(t)} = X^{(t-1)} \cup \text{St}_{X^{(t)}}(t)$
$$H_q(L^{(t)}) \to H_q(X^{(t-1)}) \to H_q(X^{(t)})$$
then implies that the latter map is eventually injective almost surely.

Now, \cref{lem:homology_infinite_complex} shows it suffices to show $\beta_q(X(T, \delta, m)) \to \infty$ almost surely, or $\beta_q(X(T, \delta, m))$ is bounded almost surely, depending on whether $x \leq \frac{1}{2q}$. For the former case, since $\beta_q(X(T, \delta, m))$ is eventually increasing, it suffices to show that for every $\varepsilon > 0$ and every $B > 0$, with probability at least $1-\varepsilon$,
$\beta_q(X(T, \delta, m)) > B$ eventually. This is a simple consequence of \cref{thm:betti_ass_orderOfMagnitude}. The first case then follows. The other case follows from putting in the first estimate of \cref{thm:mean_betti} into \cref{prop:decomposition} and applying Markov's inequality. 


\end{proof}

\section{Proofs of Preliminary Facts}
\label{sec:proofs_preliminary_facts}

\begin{proof}[Proof of \cref{prop:S_asymptotics}] We largely follow the proof of Lemma 3.1 of \citep{berger14_prefAtt_polyaUrn}. Since $\log S_v = \sum \log (1 - \psi_t)$, consider the approximation
\begin{align*}
&\log S_v - x \log (v/T)
\\&= \left[\log S_v - \sum E \log (1 - \psi_t)\right] + \sum \left[E \log (1 - \psi_t) + E\psi_t \right] - \left[\sum E \psi_t + x\log (v/T)\right],
\end{align*}
where all sums above range over $v < t \leq T$, the same for sums with unspecified ranges hereafter.

The first term, which is the only random term, forms a martingale when $v$ varies (and $T$ is fixed). Kolmogorov's inequality (Theorem 2.5.5 of \citep{durrett19_probability}) 
gives
\begin{align}\label{eqn:random_part_logSv}
P\left(\max_{1 \leq v \leq T} \lvert \log S_v - \sum E \log (1 - \psi_t) \rvert \geq \frac{C}{\sqrt{\varepsilon}} \right) \leq \frac{\varepsilon}{C^2} \sum_{1 \leq t \leq T} \Var \log (1-\psi_t),
\end{align}
where $C$ is to be chosen.
Since $0 \leq - \log(1-a) \leq \frac{a}{1-a}$ for $0 \leq a \leq 1$, for $t \geq 1$,
\begin{align*}
\Var \log (1-\psi_t) \leq E[(\log(1-\psi_t))^2] &\leq E \left [\frac{\psi_t^2}{(1-\psi_t)^2} \right ]
\\&= \frac{B(m + \delta + 2, (2m + \delta)t - (3m + \delta) - 2)}{B(m + \delta, (2m + \delta)t - (3m + \delta))} 
\\&\leq C_{m,\delta}/t^2,
\end{align*}
where $B$ denotes the Beta function. Therefore, the right-hand side of \cref{eqn:random_part_logSv} is bounded from above by $\varepsilon$ if $C^2 = C_{m,\delta} \pi^2/6$.

For the second term, since $-\frac{a^2}{1-a} \leq \log(1-a) + a \leq 0$ for $0 \leq a < 1$,
$$- \sum E[\frac{\psi_t^2}{1 - \psi_t}] \leq \sum E \log (1 - \psi_t) + E \psi_t \leq 0,$$
where, again, the the far-left term is bounded below by $-C_{m,\delta} \pi^2/6$.

For the last term,
$$\sum E\psi_t = \sum \frac{m + \delta}{(2m + \delta)t - (3m + \delta)} = \frac{m + \delta}{2m + \delta} \sum \frac{1}{t - (1 + \frac{1}{3 + \delta/m})}= x \log (T/v) + C_{m,\delta},$$
because $\sum_{1 < v \leq T} \frac{1}{t} = \int_v^T \frac{1}{t} dt + O(1)$.

The result then follows.
\end{proof}

\begin{proof}[Proof of \cref{lem:homology_infinite_complex}]
This is a special case of the theorem in Chapter 14.6 of \citep{may99_algtopo}, which states that under assumptions more general than ours, the homology group of the union is the colimit of the homology groups. In the remainder of the proof, we do the routine checking that the group on the right-hand side is indeed the colimit, which is defined in Chapter 2.6 of \citep{may99_algtopo}.

Denote by $f_{st}: H_q(X^{(s)}) \to H_q(X^{(t)})$ the homomorphism induced by the inclusion $X^{(s)} \subseteq X^{(t)}$.
By assumption, there exists some $t_0$ such that $f_{t,t+1}$ is injective for $t \geq t_0$.
Denote by $A$ the union of groups in the lemma, where ``large enough'' means $t \geq t_0$. Let $j_t: H_q(X^{(t)}) \to A$ be the inclusion homomorphism for $t \geq t_0$. For $t < t_0$, define $j_t = j_{t_0} f_{t, t_0}$.

To check that $A$ is indeed the colimit, it suffices to show that for every abelian group $B$ and every sequence of homomorphisms $g_t: H_q(X^{(t)}) \to B$, if $g_s = g_t f_{st}$ for every $s, t$, then there exists a unique homomorphism $g: A \to B$ such that 
\begin{equation}\label{eqn:colimit_condition}
g_t = g j_t.
\end{equation}

Fix $B$ and the $g_t$'s. Define $g$ as follows. For each $a \in A$, $a = j_t(a_t)$ for some $t \geq t_0$ and $a_t \in H_q(X^{(t)})$. Define $g(a) = g_t(a_t)$, which is equivalent to \cref{eqn:colimit_condition}. Therefore, if $g$ is well-defined, then $g$ is the desired map and it is unique.
To see that $g$ is well-defined, note that once $t$ is chosen, $a_t$ is uniquely determined because $j_t$ is injective for $t \geq t_0$. The choice of $t$ is immaterial because if $a = j_{t'}(a_{t'})$ for some $t' > t$, then $a_{t'} = f_{tt'}(a_t)$ (by the injectivity of $j_{t'}$) and hence $g_{t'}(a_{t'}) = g_{t'}(f_{tt'}(a_t)) = g_t(a_t)$ (by the assumption on the $g_t$'s).

To see that $g$ is a homomorphism, fix $a = j_s(a_s)$ and $b = j_{t}(b_{t})$ with $s, t \geq t_0$. Suppose $s \leq t$. Let $a'_t = f_{st}(a_s)$, hence $a = j_{t}(a'_{t})$. Hence
$$g(a + b) = g(j_{t}(a'_t + b_{t})) = g_t a'_t + g_t b_t = g_t(f_{st}a_{s}) + g(b) = g_s a_s + gb = ga + gb.$$
\end{proof}

\section{Simulations}
\label{sec:simulations}

\begin{figure}[t]
\centering
\includegraphics[height = 4cm]{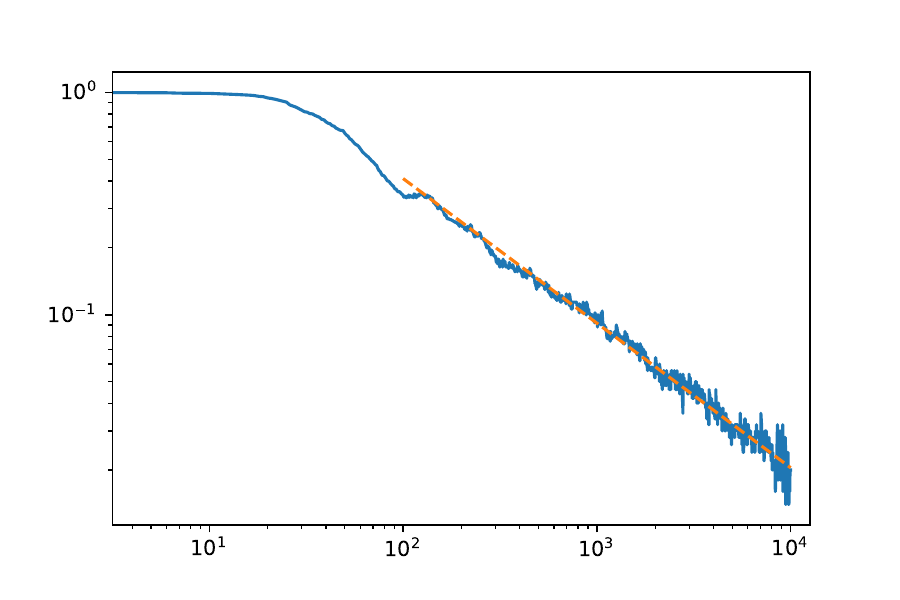}
\includegraphics[height = 4cm]{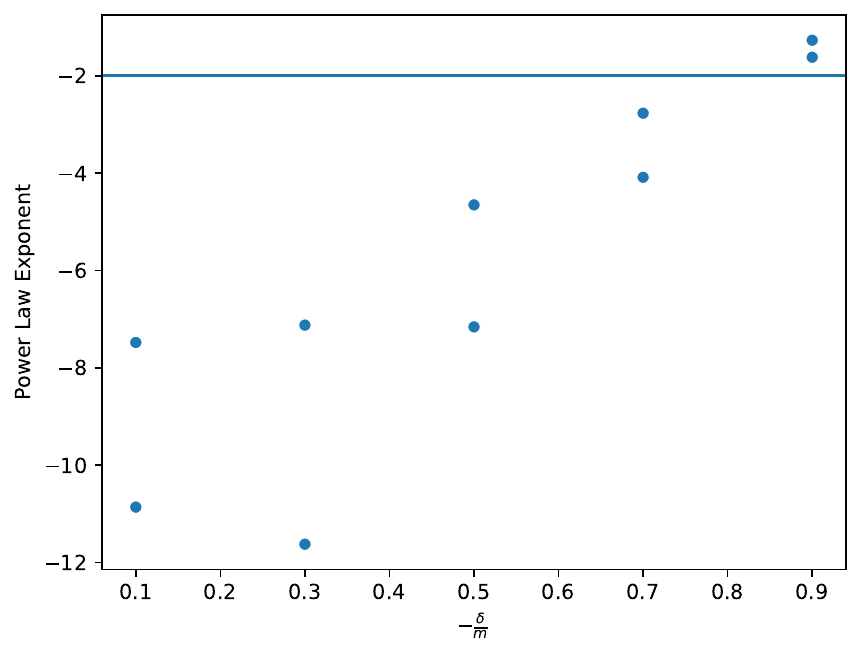}
\caption{(Left) The log-log plot of the evolution of the Kolmogorov-Smirnov norm between the distributions of normalized Betti numbers at time $t$ and time $10000$ (log of the KS norm against $\log t$). The orange dashed line is the least-square line of best-fit. Its slope is $-0.651$.
(Right) The plot of the fitted exponents of the tails of the complementary cumulative distribution functions ($1 - \text{cdf}$) of the Betti numbers of preferential attachment complexes with $10000$ nodes but different choices of $\delta$ and $m$ (exponent against $-\delta/m$). Throughout all simulations, $m$ is either $8$ or $10$. \new{This simulation and the generation of this plot were done by Avhan Misra.}}
\label{fig:simulations}
\end{figure}

In this section, we present further numerical evidence that the mean-normalized Betti numbers of finite preferential attachment clique complexes have limiting distributions that obey power laws. 

We first provide further details about the right panel of \cref{fig:teaser} in the Introduction. We simulated 500 complexes with 10000 nodes and with parameters $m = 7$ and $\delta = -5$ and observe the evolution, as the number of nodes increases, of the distribution of the mean-normalized Betti number $\beta_2/\bar \beta_2$ at dimension 2, where $\bar \beta_2$ is the sample mean of the Betti numbers at dimension 2.

The left panel of \cref{fig:simulations} shows the log-log plot of the evolution of the Kolmogorov-Smirnov norm between the distributions of normalized Betti numbers at time $t$ and time $10000$. It is estimated that the norm converges to 0 at a rate of $t^{-0.651}$.

We now vary the model parameters $\delta$ and $m$ and we increase the number of trials to $1000$. We approximate the limiting distribution by the one for the complex with $10000$ nodes. \new{This simulation was done by Avhan Misra.} The right panel of \cref{fig:simulations}, \new{prepared by Misra}, shows the fitted exponents. We note that the fitted exponents on the far right for $\delta/m = -0.9$ are less negative than $-2$. Therefore, the limiting distribution, if existent, may have an infinite variance. We also note that the right-hand side of the figure also shows that the fitted exponent is not a function of $\delta/m$, even though all our theoretical results depend on $\delta$ and $m$ through $\delta/m$ only.

\section{Future Directions}
\label{sec:future_directions}

Building on \citep{siu23_PrefAtt_betti}, we have established the asymptotic almost sure limits of the orders of magnitude of the Betti numbers of finite preferential attachment clique complexes, and we have established phase transitions for the infinite complexes for each dimension. Open questions about the algebraic-topological properties of preferential attachment complexes abound.

First, numerical evidence suggests the Betti numbers admit a scaling limit. Considerable effort is likely necessary to combine homological algebra and analysis to prove this conjecture. In fact, even the moments of the Betti numbers are not well understood.

Second, as discussed in the Introduction, common notions of analytical limits do not necessarily preserve topological properties. In order to study the large-scale topological behavior of random models, a suitable notion of convergence is yet to be developed.

Third, the topology of other related random models are yet to be studied. Our argument relies heavily on the alternative description of our specific preferential attachment model in \cref{def:polya_prefAtt}. It remains to be seen whether the topological behavior of this model is universal across different modes of preferential attachment. Further, since many real-world networks have different clustering behavior from preferential attachment models \citep{prokhorenkova17_preferentialAttachment_clusteringCoeff}, much work is needed to study the topological behavior of other random models with more complicated clustering behavior.

\appendix



\section{Homology Theory}
\label{sec:homology_theory}

Homology groups and Betti numbers are defined in \cref{def:homology}. Here we fill in gaps in this definition. We follow the exposition in \citep{munkres84algtopo} as far as possible.

First, we define simplicial complexes.

\begin{definition}[Simplicial Complex; Section 3 in Chapter 1 of \citep{munkres84algtopo}]
A simplicial complex $X$ with totally ordered vertices is a collection of finite nonempty subsets of a totally ordered set that is closed under inclusion , i.e. $\tau \in X$ whenever $\emptyset \neq \tau \subseteq \sigma$ for some $\sigma \in X$. Elements in this collection are called simplices, and the dimension of a simplex is one less the number of elements in this simplex (as a subset of the totally ordered set). $0$- and $1$-dimensional simplices are called vertices and edges.
\end{definition}

\begin{remark}
\label{rmk:simplicial_complex}
$\quad$
\begin{itemize}
\item Simplicial complexes thus defined are often called \emph{abstract} simplicial complexes, to distinguish them from their geometric realizations, which are unions of geometric simplices in Euclidean spaces. We only consider abstract simplicial complexes in this paper, with the only exception of the sphere $S^{q-1}$ in the $4^{th}$ bullet point in \cref{sec:prelim_finite_pref_att}. There, we define $S^{q-1}$ as the $\ell_1$ unit sphere, which is a union of geometric simplices in $\mathbb{R}^{q}$. Throughout this paper, we think of $S^{q-1}$ as the abstract simplicial complexes whose geometric realization is such a union. Alternatively, $S^{q-1}$ may be defined as the clique complex (defined in \cref{def:clique_complex}) with vertices $1, ..., 2q$ where $i$ and $j$ are connected by an edge if and only if $i - j \not \equiv 0 \mod q$.
\item Section 3 of \citep{munkres84algtopo} starts with the discussion on oriented simplices. We do not need such notion for \cref{def:homology} because we specialize to the case when the vertices are totally ordered. Indeed, there is a natural orientation on each simplex, namely the monotone ordering.
\end{itemize}
\end{remark}

\begin{definition}[Clique Complex]\label{def:clique_complex} 
A simplicial complex $X$ is said to be a clique complex if for distinct vertices $v_0, ..., v_q$, $\{v_0, ..., v_q\}$ is a simplex whenever there is an edge between every pair of distinct $v_i$ and $v_j$.
\end{definition}

The link of a vertex in a simplicial complex is defined as follows.

\begin{definition}[Star and Link; p.11 of \citep{munkres84algtopo}]\label{def:link} $\quad$
\begin{itemize}
\item The star of a vertex $v$ in $X$ is the subcomplex of $X$ consisting of all simplices containing $v$ (and the faces of these simplices).
\item The link of a vertex $v$ in $X$ is the subcomplex of the star of $v$ that consists of all simplices that do \emph{not} contain $v$.
\end{itemize}
\end{definition}

\begin{remark}
Our notion of star is called the \emph{closed} star in \citep{munkres84algtopo}.
\end{remark}

Next, we address the choice of coefficients.

\begin{definition}[Homology with Arbitrary Coefficients; Section 51 of \citep{munkres84algtopo}]\label{def:homology_coefficients}
Let $X$ be a simplicial complex with totally ordered vertices and $G$ be an abelian group. Let $C_q(X)$ and $\partial_q: C_q(X) \to C_{q-1}(X)$ be the chain groups and boundary maps in \cref{def:homology}. Let $C_q(X; G) = C_q(X) \otimes G$.

The homology group $H_q(X; G)$ at dimension $q$ with coefficients in $G$ is defined by $\ker (\partial_q \otimes \id_G) / \im (\partial_{q+1} \otimes \id_G)$, where $\id_G$ denotes the identity on $G$.
\end{definition}

See Section 10 in Chapter 1 of \citep{munkres84algtopo} for a less intimidating but more verbose definition.

\begin{itemize}
\item If $G = \mathbb{Z}$, one recovers \cref{def:homology}.
\item If $G$ is a field, then the homology groups are vector spaces, and hence they vanish if and only if their Betti numbers vanish.
\item We often denote $H_q(X; G)$ by $H_q(X)$ if the choice of coefficients is implied.
\end{itemize}

\begin{proposition}[Homology with Rational Coefficients; Corollary 3A.6a of \citep{hatcher02_algtopo}]\label{prop:rationalHomology}
Let $X$ be a topological space. Then
$$H_q(X; \mathbb{Q}) \cong H_q(X) \otimes \mathbb{Q}.$$
\end{proposition}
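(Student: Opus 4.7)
The plan is to prove this proposition by invoking the flatness of $\mathbb{Q}$ as a $\mathbb{Z}$-module, which lets the coefficient change commute with the formation of homology. Concretely, I would work directly at the chain level using the singular (or simplicial) chain complex $C_*(X)$, which is a complex of free abelian groups with boundary maps $\partial_q$.

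First, I would note that $C_*(X;\mathbb{Q}) = C_*(X)\otimes\mathbb{Q}$ by \cref{def:homology_coefficients}, with boundary $\partial_q \otimes \id_\mathbb{Q}$. Because $\mathbb{Q}$ is a localization of $\mathbb{Z}$ (indeed, a torsion-free divisible abelian group), the functor $-\otimes_\mathbb{Z}\mathbb{Q}$ is exact on the category of abelian groups. Applying this exactness to the short exact sequences
$$0 \to \ker \partial_q \to C_q(X) \xrightarrow{\partial_q} \im \partial_q \to 0$$
and
$$0 \to \im \partial_{q+1} \to \ker \partial_q \to H_q(X) \to 0$$
gives the identifications $\ker(\partial_q \otimes \id_\mathbb{Q}) = (\ker\partial_q)\otimes\mathbb{Q}$, $\im(\partial_{q+1}\otimes\id_\mathbb{Q}) = (\im\partial_{q+1})\otimes\mathbb{Q}$, and then
$$H_q(X;\mathbb{Q}) = \frac{(\ker\partial_q)\otimes\mathbb{Q}}{(\im\partial_{q+1})\otimes\mathbb{Q}} \cong \left(\frac{\ker\partial_q}{\im\partial_{q+1}}\right)\otimes\mathbb{Q} = H_q(X)\otimes\mathbb{Q},$$
where the middle isomorphism uses that tensoring with a flat module preserves quotients.

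A more conceptual alternative would be to invoke the Universal Coefficient Theorem for homology: for any abelian group $G$ one has a (naturally split) short exact sequence
$$0 \to H_q(X)\otimes G \to H_q(X;G) \to \mathrm{Tor}_1^{\mathbb{Z}}(H_{q-1}(X),G) \to 0.$$
Setting $G=\mathbb{Q}$ and using $\mathrm{Tor}_1^{\mathbb{Z}}(-,\mathbb{Q})=0$ (again by flatness of $\mathbb{Q}$) collapses the sequence to the claimed isomorphism. The first route is more elementary; the second makes transparent how the argument generalizes to coefficients in other flat modules (e.g., any field of characteristic zero, or $\mathbb{Z}[1/p]$).

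The main obstacle, modest though it is, is the verification that $\mathbb{Q}$ is flat over $\mathbb{Z}$. The quickest justification is that $\mathbb{Q} = \varinjlim_n (\tfrac{1}{n}\mathbb{Z})$ is a filtered colimit of free $\mathbb{Z}$-modules, and filtered colimits of flat modules are flat; alternatively, one checks directly that $\mathbb{Q}$ is torsion-free and over a PID torsion-free is equivalent to flat. Once this is in hand, the rest of the proof is routine diagram manipulation.
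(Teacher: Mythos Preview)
Your argument is correct: both routes you outline---the direct flatness argument at the chain level and the Universal Coefficient Theorem with $\mathrm{Tor}_1^{\mathbb{Z}}(-,\mathbb{Q})=0$---are standard and valid proofs of this isomorphism.

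As for comparison with the paper: the paper does not supply its own proof of this proposition. It is stated in the appendix as background material and attributed to Corollary~3A.6(a) of Hatcher, where it is obtained precisely as you describe in your second route, namely as an immediate consequence of the Universal Coefficient Theorem once one observes that $\mathbb{Q}$ is torsion-free (hence flat over $\mathbb{Z}$). Your first route via exactness of $-\otimes_{\mathbb{Z}}\mathbb{Q}$ applied to the kernel/image short exact sequences is a slightly more elementary unpacking of the same idea that avoids naming the UCT explicitly; it buys you a self-contained argument at the cost of a little more bookkeeping. Either is entirely adequate here.
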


Finally, we define induced maps between homology groups. A simplicial map $f: K \to L$ between two simplicial complexes is a function between the vertex sets of the two complexes such that $\{f(v): v \in \sigma\}$ is a simplex in $L$ for every simplex $\sigma \in K$.

\begin{definition}[Induced Maps Between Homology Groups; Section 2 of Chapter 1 of \citep{munkres84algtopo}]
\label{def:induced_homomorphism}
For every integer $q$, every simplicial map $f: K \to L$ induces a homomorphism $f_\#: C_q(K; G) \to C_q(L; G)$ defined by
$$f_\#\{v_0, ..., v_q\} \otimes g = 
\begin{cases}
(-1)^{\text{sign}(\sigma)}\{f(v_0), ..., f(v_q)\} \otimes g & \text{ if } f(v_0), ..., f(v_q) \text{ are distinct}\\
0 & \text{ otherwise,}
\end{cases}$$
where $\sigma$ is the permutation on $\{0, ..., q\}$ such that $f(v_{\sigma(0)}), ... f(v_{\sigma(q)})$ is monotonically increasing under the ordering of vertices of $L$.
This map in turn induces a homomorphism $f_q: H_q(K; G) \to H_q(L; G)$ defined by
$$f_q(z + \im (\partial^K_{q+1} \otimes \id_G)) = f_\#(z) + \im (\partial^L_{q+1} \otimes \id_G),$$
for every $q$-cycle $z$ of $K$, where the two superscripted $\im \partial_{q+1}$'s are the boundary groups of $K$ and $L$ respectively.
\end{definition}

\begin{theorem}[Mayer-Vietoris Sequence; Theorem 25.1 of \citep{munkres84algtopo}]\label{thm:mayer_vietoris}
Let $X$ and $Y$ be subcomplexes of a simplicial complex $Z$. Then there exist maps such that the following sequence is exact.
$$... \to H_q(X \cap Y) \to H_q(X) \oplus H_q(Y) \to H_q(X \cup Y) \to H_{q-1}(X \cap Y) \to ...$$
\end{theorem}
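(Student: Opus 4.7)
The plan is to deduce the long exact Mayer--Vietoris sequence from a short exact sequence of chain complexes together with the standard zig-zag (or snake) lemma. Throughout, fix an abelian coefficient group $G$ and suppress it from the notation; all chain and homology groups are taken with coefficients in $G$.

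First, I would define the chain-level maps. For each $q$, let $\phi_q\colon C_q(X\cap Y)\to C_q(X)\oplus C_q(Y)$ be $c\mapsto (c,-c)$, where I use that every simplex of $X\cap Y$ is a simplex of both $X$ and $Y$, so that $C_q(X\cap Y)$ sits naturally inside both $C_q(X)$ and $C_q(Y)$ as a subgroup. Let $\psi_q\colon C_q(X)\oplus C_q(Y)\to C_q(X\cup Y)$ be $(a,b)\mapsto a+b$, using the inclusion of each of $C_q(X)$, $C_q(Y)$ into $C_q(X\cup Y)$. Both maps commute with the boundary operators $\partial_q$ of \cref{def:homology,def:homology_coefficients}, essentially because the boundary of a simplex of $X\cap Y$ (resp.\ $X$, $Y$) lives in $X\cap Y$ (resp.\ $X$, $Y$), so $\phi$ and $\psi$ are chain maps.

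Next, I would verify that the sequence
\begin{equation*}
0 \;\to\; C_q(X\cap Y) \;\xrightarrow{\phi_q}\; C_q(X)\oplus C_q(Y) \;\xrightarrow{\psi_q}\; C_q(X\cup Y) \;\to\; 0
\end{equation*}
is short exact for every $q$. Injectivity of $\phi_q$ is immediate from the first coordinate. Surjectivity of $\psi_q$ is the crucial place where I use that $X$ and $Y$ are \emph{subcomplexes}: every simplex of $X\cup Y$ is, by definition, a simplex of $X$ or of $Y$, and so any basis element of $C_q(X\cup Y)$ is hit by $\psi_q$. For exactness at the middle, $\psi_q\circ\phi_q=0$ is trivial; conversely, if $(a,b)$ satisfies $a+b=0$ in $C_q(X\cup Y)$, then $a=-b$ inside $C_q(X\cup Y)$, so each simplex in the support of $a$ must appear with opposite coefficients in $b$, forcing such simplices to lie in both $X$ and $Y$, hence in $X\cap Y$; thus $(a,b)=\phi_q(a)$ for some $a\in C_q(X\cap Y)$.

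Finally, I would invoke the zig-zag lemma for short exact sequences of chain complexes (Theorem~24.1 of \citep{munkres84algtopo}), which constructs a connecting homomorphism $\partial_*\colon H_q(X\cup Y)\to H_{q-1}(X\cap Y)$ and assembles the resulting maps into the long exact sequence
\begin{equation*}
\cdots \to H_q(X\cap Y) \xrightarrow{\phi_*} H_q(X)\oplus H_q(Y) \xrightarrow{\psi_*} H_q(X\cup Y) \xrightarrow{\partial_*} H_{q-1}(X\cap Y) \to \cdots
\end{equation*}
The main obstacle is the verification of surjectivity of $\psi_q$; everything else is either immediate from the definitions of $\phi_q,\psi_q$ and the formula for $\partial_q$ in \cref{def:homology}, or a direct appeal to the zig-zag lemma, which is independent of our specific setting.
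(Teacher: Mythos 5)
Your proof is correct and is essentially the standard argument: the paper does not prove this theorem itself but cites Theorem 25.1 of \citep{munkres84algtopo}, whose proof is exactly your route — the short exact sequence of chain complexes $0 \to C_q(X\cap Y) \to C_q(X)\oplus C_q(Y) \to C_q(X\cup Y) \to 0$ followed by the zig-zag lemma. All the verifications you flag (surjectivity of $\psi_q$ and middle exactness via matching coefficients on simplices common to $X$ and $Y$) are the right ones and go through as you describe.
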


A sequence $... A_{n+1} \xrightarrow{\varphi_{n+1}} A_n \xrightarrow{\varphi_n} A_{n-1} \to ...$ is said to be exact if $\ker \varphi_n = \im \varphi_{n+1}$ for every $n$. (Cf. Section 23 of \citep{munkres84algtopo}) In particular, if $A_n \cong 0$, then $\varphi_n$ is injective.

\section{Homotopy Theory}
\label{sec:homotopy_theory}

Homotopy and homotopy-connectedness are defined in \cref{def:homotopy_connected}. We start by filling gaps in the definition. We follow the exposition in Chapters 0 and 4 of \citep{hatcher02_algtopo}. For applications of the concept of homotopy-connectedness in graph theory, see Sections 4 and 6 of \citep{bjorner96_topological_combinatorics}.

Homotopy-connectedness is typically defined via homotopy groups, whose elements are, intuitively speaking, spherical ``holes'' of the space.

\begin{definition}[Homotopy Groups; p.340 -- 341 of \citep{hatcher02_algtopo}]
Let $q$ be a nonnegative integer and $S^q$ be the $q$-dimensional sphere. Fix $s_0 \in S^q$.
Let $X$ be a topological space and $x_0 \in X$.

As a set, the homotopy group of $X$ with base-point $x_0$ at dimension $q$ is the set of equivalence classes of maps $f: S^q \to X$ such that $f(s_0) = x_0$, where the equivalence relation is homotopy. In symbols
$$\pi_q(X, x_0) = \{f: f \text{ is a map from $S^q$ to $X$ such that } f(s_0) = x_0\}/\sim,\text{ where}$$
$f \sim g$ if and only if $f$ and $g$ are homotopic.
\end{definition}

\begin{remark}$\quad$
\begin{itemize}
\item Homotopy groups, as their names suggest, are groups, but we will not describe the group operation because we never use it in this paper.
\item Two equivalent definitions of homotopy groups are given in \citep{hatcher02_algtopo}. The version above is stated at the last paragraph of p.340.
\item We note that if $X$ is path-connected, then $\pi_q(X, x_0)$ and $\pi_q(X, x)$ are isomorphic for every $x \in X$.
\end{itemize}
\end{remark}

\begin{example}
For spheres, $\pi_q(S^q) \cong \mathbb{Z}$, and $\pi_r(S^q) \cong 0$ for $r < q$. This is a testimony to the facts that a sphere has a spherical hole of its dimension, and that it has no lower-dimensional hole.
\end{example}

\begin{definition}[Homotopy-Connectedness; p.346 of \citep{hatcher02_algtopo}]
\label{def:homotopy_equivalent_via_group}
A space $X$ is said to be $q$-homotopy-connected if and only if $\pi_r(X, x_0) \cong 0$ for every $0 \leq r \leq q$ and every $x_0 \in X$.
\end{definition}

\begin{remark}$\quad$
\begin{itemize}
\item In \citep{hatcher02_algtopo}, $q$-homotopy-connectedness is abbreviated to $q$-connectedness. We refrain using from this abbreviation to avoid confusion with another graph-theoretic concept, which describes graphs that remain connected after the removal of $q$ vertices.
\item One may readily verify that \cref{def:homotopy_connected} and this definition are equivalent.
\end{itemize}
\end{remark}

Intuitively, in a \emph{1-homotopy-connected} space, also known as a \emph{simply-connected} space, every pair of \emph{paths} are connected by a path of paths; in a \emph{2-homotopy-connected} space, every pair of paths of paths are connected by a path of paths of paths, etc, because the image of the two hemispheres of $S^2$ can be viewed as the pair of paths of paths and the image of the 3-dimensional ball $B^3$ as the path of paths of paths.

\begin{definition}[Contractible; p.4 of \citep{hatcher02_algtopo}]
\label{def:contractible}
A space $X$ is said to be contractible if the identity map on $X$ is homotopic to a constant map.
\end{definition}

\begin{theorem}[Whitehead's Theorem; Theorem 4.5 of \citep{hatcher02_algtopo}]
\label{thm:whitehead}
Let $f: X \to Y$ be a map between two connected CW complexes and let $x_0 \in X$. If $f_*: \pi_q(X, x_0) \to \pi_q(Y, f(x_0))$ is an isomorphism for every positive integer $q$, then $f$ is a homotopy equivalence, i.e. there exists a map $g: Y \to X$ such that $g(f(x_0)) = x_0$, and $gf$ and $fg$ are homotopic to the identity maps on $X$ and $Y$ respectively.
\end{theorem}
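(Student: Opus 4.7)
The plan is to carry out the standard two-step reduction for Whitehead's theorem. First I would replace $f$ with a homotopy-equivalent inclusion via the mapping cylinder $M_f = ((X \times [0,1]) \sqcup Y)/{\sim}$, where $(x,1) \sim f(x)$. The natural map $i: X \hookrightarrow M_f$ sending $x \mapsto (x,0)$ is a subcomplex inclusion of CW complexes, and $Y$ is a strong deformation retract of $M_f$ via the obvious sliding homotopy. Since $f$ factors as $i$ followed by this retraction, $f$ is a homotopy equivalence iff $i$ is; moreover $i_* = f_*$ on every $\pi_q$ after the identification $\pi_q(Y) \cong \pi_q(M_f)$. It therefore suffices to prove: if $(M, X)$ is a CW pair and the inclusion induces isomorphisms on $\pi_q$ for every $q \geq 0$, then $X$ is a strong deformation retract of $M$ rel the chosen base-point.

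Second, by the long exact sequence of the pair $(M, X)$, the hypothesis that the inclusion induces isomorphisms on all absolute homotopy groups is equivalent to the vanishing of every relative homotopy group $\pi_q(M, X, x_0)$. The technical engine is then the compression lemma: if $\pi_q(M, X, x_0) = 0$, every map of pairs $(D^q, \partial D^q) \to (M, X)$ is homotopic rel $\partial D^q$ to a map into $X$. This is essentially a reformulation of the vanishing of the relative group, combined with cellular approximation to bring an arbitrary map of pairs into a form to which the vanishing hypothesis directly applies.

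Third, I would construct the deformation retraction $H: M \times [0,1] \to M$ inductively over the relative skeleta of $(M, X)$. Set $H \equiv \mathrm{id}$ on $X$, and suppose a homotopy rel $X$ has already been built that pushes $M^{(q)} \cup X$ into $X$ on the time subinterval $[0,\, 1 - 2^{-q}]$. For each $(q+1)$-cell $e$ of $M \setminus X$ with characteristic map $\Phi: (D^{q+1}, \partial D^{q+1}) \to (M,\, M^{(q)} \cup X)$, first use the inductive homotopy to carry $\Phi|_{\partial D^{q+1}}$ into $X$, then apply the compression lemma to deform the resulting map of pairs into $X$ rel boundary, all performed within the subinterval $[1 - 2^{-q},\, 1 - 2^{-q-1}]$. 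The homotopy extension property for CW pairs allows these cell-by-cell compressions to be glued into a homotopy defined on all of $M$ at each inductive stage, with the base-point $x_0 \in X$ held fixed throughout.

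The main obstacle is the passage to the terminal time $t = 1$ when $M$ has cells in infinitely many dimensions: the resulting infinite composition of homotopies must be shown to yield a continuous map at $t = 1$. One handles this via the CW topology, which declares a map out of a CW complex to be continuous iff its restriction to each closed cell is continuous. Because each closed cell of $M$ meets only finitely many cells and lies in some finite-dimensional skeleton, the restriction of $H$ to any given closed cell stabilizes after finitely many inductive stages, so continuity holds at $t = 1$. The resulting map $r := H(\cdot, 1): M \to X$ together with the inclusion $i$ satisfies $ri = \mathrm{id}_X$ and $ir \simeq \mathrm{id}_M$ via $H$, and transporting this conclusion back across the mapping-cylinder identification $Y \simeq M_f$ yields the desired homotopy inverse $g: Y \to X$ to $f$ with $g(f(x_0)) = x_0$.
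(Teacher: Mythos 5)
Your proposal is correct and is essentially the standard argument the paper relies on by citation (Theorem 4.5 of Hatcher, proved there via the mapping cylinder reduction, the compression lemma, and the skeleton-by-skeleton deformation retraction with the dyadic time reparametrization you describe). The paper offers no independent proof, so there is nothing further to compare.
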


\begin{remark} $\quad$
\begin{itemize}
\item Every simplicial complex is a CW complex, which is defined on p.519 of \citep{hatcher02_algtopo}.
\item Homotopy equivalence is defined on p.3 of \citep{hatcher02_algtopo}.
\item For a fixed $q$, $f_*$ is defined as follows. For each map $\sigma: S^q \to X$ such that $\sigma(s_0) = x_0$, $f_*[\sigma] = [f\sigma]$, where $[\cdot]$ denotes the equivalence class under homotopy.
\end{itemize}
\end{remark}

As a corollary, if $Y$ is path-connected and $\pi_q(Y, y_0) \cong 0$ for every positive integer $q$, then letting $f$ be the inclusion of $y_0$ in $Y$ shows $Y$ is contractible. (See the second paragraph of p.348 of \citep{hatcher02_algtopo}.)

\begin{theorem}[Hurewicz's Theorem; Theorem 4.32 of \citep{hatcher02_algtopo}]
\label{thm:hurewicz}
If a space $X$ is $q$-homotopy-connected for some integer $q \geq 1$, then $H_r(X) \cong 0$ for $0 < r \leq q$ and $\pi_q(X, x_0)\cong H_q(X)$ for every $x_0 \in X$.
\end{theorem}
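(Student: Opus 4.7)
The plan is to reduce Hurewicz's theorem to a cellular computation on a minimal CW model of $X$. First, by the CW approximation theorem, every space admits a weak homotopy equivalence from a CW complex, and since both $\pi_*$ and singular $H_*$ are invariants of weak equivalence, we may assume $X$ is itself a CW complex.

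Next, I would inductively construct a CW complex $Y$ weakly equivalent to $X$ whose only cells of dimension at most $q$ consist of a single $0$-cell mapping to $x_0$. The standard CW-approximation construction attaches $r$-cells at stage $r$ in order to (i) surject onto $\pi_r$ of the target and (ii) kill the kernel of the map on $\pi_{r-1}$ from the partially built complex. Because $X$ is $q$-homotopy-connected, $\pi_r(X, x_0) = 0$ for $0 \le r \le q$, so no $r$-cells with $1 \le r \le q$ are required; the inductive construction yields $Y^{(q)} = \{x_0\}$ together with a weak equivalence $Y \to X$.

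On this model, the cellular chain groups $C^{\mathrm{cell}}_r(Y)$ vanish for $1 \le r \le q$, so cellular homology (which agrees with singular homology on CW complexes) gives $H_r(X) \cong H_r(Y) = 0$ for $0 < r \le q$. This establishes the first claim. For the second claim, $\pi_q(X,x_0) = 0$ by the hypothesis of $q$-homotopy-connectedness and $H_q(X) = 0$ by what we just proved, so the isomorphism $\pi_q(X,x_0) \cong H_q(X)$ is automatic. One may alternatively verify the \emph{substantive} Hurewicz isomorphism $\pi_{q+1}(X) \cong H_{q+1}(X)$ on the same model $Y$: the $(q+1)$-skeleton is a wedge of $(q+1)$-spheres, where both groups are free abelian on the $(q+1)$-cells; the Hurewicz homomorphism matches these generators, and one extends to the full complex by the cellular boundary formula together with the five-lemma.

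The main obstacle is the inductive cell-attachment step, which must simultaneously preserve compatibility with a map to $X$ and achieve the desired surjectivity and injectivity on homotopy groups; this is technically involved but classical, going back to J.H.C.\ Whitehead's analysis of CW complexes. Once that machinery is in place, the conclusion reduces to the essentially trivial observation that a complex with no cells in the relevant dimensions has vanishing homology there.
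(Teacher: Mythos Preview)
The paper does not prove this statement at all: it is quoted as a classical result from Hatcher's textbook and used as a black box (specifically, only the implication ``$q$-homotopy-connected $\Rightarrow$ $H_r = 0$ for $0 < r \le q$'' is ever invoked, in the paragraph after \cref{thm:infinite_betti}). So there is no in-paper argument to compare against; any correct proof you give goes strictly beyond what the paper does.

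Your argument is sound and is one of the standard routes to Hurewicz---essentially Hatcher's Proposition~4.15 (a $q$-connected space admits a CW model with trivial $q$-skeleton) followed by cellular homology. The inductive cell-attachment you describe does work as stated: starting from a single $0$-cell, at each stage $r \le q$ the kernel of $\pi_{r-1}(\{*\}) \to \pi_{r-1}(X)$ is trivial and $\pi_r(X) = 0$, so no $r$-cells are needed and $Y^{(q)} = \{*\}$. You are also right that, in the statement as the paper phrases it, the isomorphism $\pi_q \cong H_q$ is the trivial assertion $0 \cong 0$; the nontrivial Hurewicz isomorphism sits one dimension higher, as you note in your final paragraph. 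One small caveat: invoking ``singular $H_*$ is an invariant of weak equivalence'' at the outset is itself typically justified via CW approximation plus a relative-Hurewicz-type argument (Hatcher~4.21), so if you are writing this up from scratch you should be careful not to make that step circular---but this is a matter of exposition, not a gap in the logic.
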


We conclude this section by commenting on the difficulty of homotopy theory and sampling some positive results. The difficulty of homotopy theory is best illustrated by the fact that the computation of the higher homotopy groups of spheres is still an active area of research. In fact, even the computation of the higher stable homotopy groups of spheres, which are better behaved than higher homotopy groups, is difficult.

However, the \emph{ranks} of the higher homotopy groups of spheres are known (Cf. Serre's finiteness theorem, viz. Proposition V.3.3 and Corollary V.6.2 of \citep{serre51_finiteness}): if $r > q$,
$$\rk \pi_r(S^q) =
\begin{cases}
1 & \text{ if }r = 4k-1, q = 2k \text{ for some } k\\
0 & \text{ otherwise.}
\end{cases}
$$
For another positive result regarding the computation of homotopy groups, for each \emph{fixed} $q \geq 2$, there is a polynomial-time algorithm to compute $\pi_q(X)$ for simplicial complexes $X$ such that $\pi_1(X) \cong 0$ \citep{cadek14_homotopyGroupComputation}.

\section{Comparison between Homology and Homotopy Groups}
\label{sec:homology_vs_homotopy}

Homology groups and homotopy groups both capture holes of spaces, but they are subtly different. Homology is more flexible about both holes and trivial holes, in the sense that every map on a sphere gives rise to a cycle, while every path between maps on spheres also gives rise to a boundary. The converse is not true for each case. Consider, for instance, the torus. Its homology group at dimension 2 is $\mathbb{Z}$, which is generated by the torus itself as a cycle. The homotopy group at dimension 2, however, is trivial. In this case, homology captures more structures. On the other hand, consider the torus with a small disc removed. The boundary of the small disc is a nontrivial element in the 1-dimensional homotopy group (it is $aba^{-1}b^{-1}$, where $a$ and $b$ are the longitude and the meridian of the torus), but it is a trivial element in the homology group, because it is the boundary of the whole space as a 2-cycle.



\bibliography{bib_randCpx}

%



\end{document}